\newtheorem{lemma}{Lemma}[section]
\newtheorem{theorem}[lemma]{Theorem}
\newtheorem*{introthm}{Theorem}
\newtheorem{defn}[lemma]{Definition}
\newtheorem{prop}[lemma]{Proposition}
\newtheorem{thm}[lemma]{Theorem}
\newtheorem{cor}[lemma]{Corollary}
\theoremstyle{definition}
\theoremstyle{remark}
\newcommand{\Ker}{\operatorname{ker }\Gamma_{F}}
\newcommand{\Cok}{\operatorname{cok }\Gamma_{F}}
\numberwithin{equation}{section}
\begin{document}

\title{A structure theorem for $\mathbb{P}^{1}-\operatorname{Spec }k$-bimodules}
\author{A. Nyman}
\address{Department of Mathematics, 516 High St, Western Washington University, Bellingham, WA 98225-9063}
\email{adam.nyman@wwu.edu}
\keywords{}
\date{\today}
\thanks{2010 {\it Mathematics Subject Classification. } Primary 18A25; Secondary 14A22}

\maketitle

\begin{abstract}
Let $k$ be an algebraically closed field.  Using the Eilenberg-Watts theorem over schemes \cite{N}, we determine the structure of $k$-linear right exact direct limit and coherence preserving functors from the category of quasi-coherent sheaves on $\mathbb{P}_{k}^{1}$ to the category of vector spaces over $k$.  As a consequence, we characterize those functors which are integral transforms.
\end{abstract}

\pagenumbering{arabic}

\section{Introduction}
Throughout, we work over an algebraically closed field $k$.  For any $k$-algebra $R$, we let ${\sf Mod }R$ denote the category of right $R$-modules, and for any $k$-scheme $X$, we let ${\sf Qcoh }X$ denote the category of quasi-coherent sheaves on $X$.  The purpose of this note is to prove the following (Theorem \ref{thm.extension}):
\begin{introthm}
If $F:{\sf Qcoh }\mathbb{P}^{1}_{k} \rightarrow {\sf Mod }k$ is a $k$-linear right exact direct limit and coherence preserving functor then there exists a coherent torsion sheaf $\mathcal{T}$ and nonnegative integers $n$, $n_{i}$ such that
$$
F \cong \oplus_{i=-n}^\infty {H}^{1}(\mathbb{P}^{1},(-)(i))^{\oplus n_{i}} \oplus H^{0}(\mathbb{P}^{1},-\otimes \mathcal{T}).
$$
\end{introthm}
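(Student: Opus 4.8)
The plan is to pass the problem through the Eilenberg--Watts theorem over schemes and then classify the resulting bimodule. Since $F$ is $k$-linear, right exact, and preserves direct limits and coherence, the Eilenberg--Watts theorem over schemes \cite{N} presents $F$ as the tensor functor associated to a $\mathbb{P}^1_k$-$\operatorname{Spec} k$-bimodule $\mathcal{B}$. So it suffices to (i) describe all such $\mathcal{B}$ and (ii) identify the tensor functor attached to each building block; this is exactly the structure theorem for $\mathbb{P}^1$-$\operatorname{Spec} k$-bimodules of the title. Throughout one may work with ${\sf coh}\,\mathbb{P}^1_k$, since every quasi-coherent sheaf on $\mathbb{P}^1$ is a filtered colimit of coherent ones and all functors in sight preserve such colimits.

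For (i) I would first split $\mathcal{B}$ along its maximal torsion subobject $\mathcal{B}_t$, with torsion-free quotient $\mathcal{B}_{tf}$, and argue that the sequence splits. The torsion part should be a \emph{coherent} torsion sheaf $\mathcal{T}$: testing coherence preservation of $F$ against $\mathcal{O}$ and against skyscrapers forces finite length, and a direct computation identifies the corresponding tensor functor with $H^0(\mathbb{P}^1,-\otimes\mathcal{T})$ --- which is right exact precisely because higher cohomology vanishes on torsion sheaves. The torsion-free part is the crux: one shows that a torsion-free $\mathbb{P}^1_k$-$\operatorname{Spec} k$-bimodule is a direct sum of copies of bimodules $\mathcal{E}_i$, $i\in\mathbb{Z}$, where $\mathcal{E}_i$ is singled out by the property that its tensor functor is $H^1(\mathbb{P}^1,(-)(i))$ (equivalently, by Serre duality, $\operatorname{Hom}(-,\mathcal{O}(-i-2))^{*}$); concretely $\mathcal{E}_i$ is extracted from the standard affine cover of $\mathbb{P}^1$ via a \v{C}ech-type construction. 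Granting a decomposition $\mathcal{B}_{tf}\cong\bigoplus_i\mathcal{E}_i^{\oplus n_i}$ and the fact that formation of the tensor functor is additive in the bimodule, assembling the two contributions yields the stated isomorphism, with $\mathcal{T}=\mathcal{B}_t$.

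It remains to pin down the range of $i$ and to verify the converse. Coherence preservation forces the set of $i$ with $n_i\neq0$ to be bounded below, say by $-n$: otherwise $F(\mathcal{O})$ would contain $\bigoplus_i H^1(\mathbb{P}^1,\mathcal{O}(i))^{\oplus n_i}$, which is infinite-dimensional because $\dim H^1(\mathbb{P}^1,\mathcal{O}(i))=-i-1$ for $i\le-2$; and each $n_i$ is finite, as one sees by evaluating $F$ on a sufficiently negative twist of $\mathcal{O}$. Conversely, each summand on the right-hand side is $k$-linear, right exact, and preserves direct limits and coherence --- for $H^1(\mathbb{P}^1,(-)(i))$ because $\mathbb{P}^1$ has cohomological dimension one, cohomology commutes with filtered colimits on a noetherian scheme, and Serre's finiteness theorem holds; for $H^0(\mathbb{P}^1,-\otimes\mathcal{T})$ because $\Gamma$ is exact on torsion sheaves --- and the (possibly infinite) direct sum is a well-defined functor to ${\sf Mod}\,k$ because, by Serre vanishing, for each fixed coherent $\mathcal{F}$ only finitely many twists $\mathcal{F}(i)$ have nonvanishing $H^1$.

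The essential difficulty lies in the structural assertion about torsion-free bimodules: that each is a direct sum of the $\mathcal{E}_i$, and that the torsion subobject splits off. Grothendieck's splitting theorem classifies coherent bundles on $\mathbb{P}^1$, but the bimodules here are not ordinary coherent sheaves (indeed the $\mathcal{E}_i$ are not even quasi-coherent sheaves on $\mathbb{P}^1$ in the naive sense), so it cannot be invoked directly; instead one must analyze the internal structure of $\mathcal{B}$ --- its lattice of finitely generated subobjects, or, equivalently, the graded $k[x,y]$-module $\bigoplus_n F(\mathcal{O}(n))$ --- in order to peel off each $\mathcal{E}_i$ and to control the $\operatorname{Ext}$ between the torsion and torsion-free parts so as to obtain the splitting. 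Once that decomposition is in hand, the finiteness and boundedness statements follow from coherence preservation as indicated above.
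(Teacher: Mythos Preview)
Your proposal rests on a misreading of the Eilenberg--Watts theorem over schemes. That theorem does \emph{not} present $F$ as the tensor functor of a bimodule $\mathcal{B}$; rather, it produces a sheaf $W(F)$ on $\mathbb{P}^1 \times \operatorname{Spec} k \cong \mathbb{P}^1$ together with a natural transformation $\Gamma_F : F \to H^0(\mathbb{P}^1, - \otimes W(F))$ whose kernel and cokernel are totally global---and when the source is non-affine, $\Gamma_F$ is typically \emph{not} an isomorphism. Indeed, the functors $H^1(\mathbb{P}^1, (-)(i))$ are totally global and are not integral transforms at all: there is no sheaf $\mathcal{E}_i$ on $\mathbb{P}^1$ (or on $\mathbb{P}^1\times\operatorname{Spec}k$) with $H^0(\mathbb{P}^1, - \otimes \mathcal{E}_i) \cong H^1(\mathbb{P}^1, (-)(i))$. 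So there is no ambient category of ``bimodules'' in which your $\mathcal{B}$ lives, carries a torsion/torsion-free decomposition, and has the $\mathcal{E}_i$ as building blocks; the first step of your plan does not get off the ground.

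What the paper actually does is work directly with the sequence
\[
0 \to \ker\Gamma_F \to F \to H^0(\mathbb{P}^1, - \otimes W(F)) \to \operatorname{cok}\Gamma_F \to 0
\]
in the functor category. The substantive steps are: (i) show $W(F)$ is coherent torsion, which is where coherence preservation enters, via a direct-limit argument on local cohomology; (ii) deduce $\operatorname{cok}\Gamma_F = 0$; (iii) prove $\ker\Gamma_F$ is admissible in the sense of \cite{N}, hence a sum of $H^1(\mathbb{P}^1,(-)(i))$'s---the delicate point being to rule out a nonzero asymptotic value $m = \lim_{n\to\infty} \dim_k \ker\Gamma_F(\mathcal{O}(n))$, accomplished by exhibiting, if $m>0$, a monomorphism from a non-totally-global functor into $\ker\Gamma_F$; and (iv) split the resulting short exact sequence by an inductive construction of a retraction on the $\mathcal{O}(n)$. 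Your instinct to study the graded module $\bigoplus_n F(\mathcal{O}(n))$ is on target---steps (iii) and (iv) amount to exactly that---but the proposal supplies none of these arguments, and the framing in terms of a hypothetical object $\mathcal{B}$ with a torsion filtration obscures rather than organizes what must be proved.
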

Our motivation for determining the structure of such functors is twofold.

Our first motivation comes from the characterization of direct limit preserving right exact functors between module categories of $k$-algebras carried out independently by Eilenberg \cite{E} and Watts \cite{W}.  The characterization (known as the Eilenberg-Watts theorem) says that if $R$ and $S$ are $k$-algebras and $F:{\sf Mod }R \rightarrow {\sf Mod S}$ is a $k$-linear right exact direct limit preserving functor, then $F$ is an integral transform, i.e. is isomorphic to $-\otimes_{R}M$, where $M$ is a $k$-central $R-S$-bimodule.  In \cite{N}, $k$-linear direct limit preserving right exact functors between categories of quasi-coherent sheaves on schemes are studied.  The main result in \cite{N} is that such functors are {\it almost} integral transforms.  To describe the result more precisely, we introduce some notation.  We let $X$ be a quasi-compact and separated scheme and we let $Y$ be a separated scheme.  We let ${\sf Bimod}_{k}(X-Y)$ denote the category of $k$-linear right exact direct limit preserving functors from ${\sf Qcoh }X$ to ${\sf Qcoh }Y$ and we let
$$
W:{\sf Bimod}_{k}(X-Y) \rightarrow {\sf Qcoh }X  \times  Y,
$$
denote the Eilenberg-Watts functor (see \cite[Section 5]{N} for details).  We let $\operatorname{pr}_{1,2}:X \times Y \rightarrow X,Y$ denote standard projection maps and we recall a

\begin{defn} \label{defn.totallyglobal}
A $k$-linear functor $F: {\sf Qcoh }X \rightarrow {\sf Qcoh }Y$ is {\it totally global} if for every
open immersion $u:U \rightarrow X$ with $U$ affine, $Fu_{*}=0$.
\end{defn}
The Eilenberg-Watts theorem over schemes is the following

\begin{theorem} \label{thm.watts} \cite[Section 6]{N}
If $F$ is an object in ${\sf Bimod}_{k}(X-Y)$ then there exists a natural transformation
$$
\Gamma_{F}:F \longrightarrow \operatorname{pr}_{2*}(\operatorname{pr}_{1}^{*}-\otimes_{\mathcal{O}_{X \times Y}}W(F))
$$
such that $\operatorname{ker }\Gamma_{F}$
and $\operatorname{cok }\Gamma_{F}$ are totally global.  Furthermore, if $X$ is affine, then $\Gamma_{F}$ is an isomorphism.
\end{theorem}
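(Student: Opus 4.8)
The plan is to reduce, over a finite affine open cover of $X$, to the classical Eilenberg-Watts theorem, and then to glue the resulting local identifications by a \v{C}ech argument. The input from the affine case is essentially formal, and once the construction is in place the two asserted properties of $\Gamma_{F}$ follow readily; the main obstacle is the bookkeeping needed to show that the glued-together transformation is well defined, natural in both $F$ and its argument, and independent of the chosen cover, so that it genuinely refines the Eilenberg-Watts functor $W$ of \cite{N}.

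First I would settle the case $X = \operatorname{Spec}R$ affine, which in particular yields the final sentence. Here $\operatorname{pr}_{2}\colon X\times Y\to Y$ is affine, so $\operatorname{pr}_{2*}$ is exact and preserves all colimits; hence $G := \operatorname{pr}_{2*}(\operatorname{pr}_{1}^{*}(-)\otimes_{\mathcal{O}_{X\times Y}}W(F))$ again lies in ${\sf Bimod}_{k}(X-Y)$. Identifying ${\sf Qcoh }X\simeq{\sf Mod }R$, and ${\sf Qcoh }(X\times Y)$ with the category of quasi-coherent sheaves of $(R\otimes_{k}\mathcal{O}_{Y})$-modules, the sheaf $W(F)$ becomes $F(\mathcal{O}_{X})$ equipped with its tautological left $R$-action (through $R\cong\operatorname{End}_{{\sf Qcoh }X}(\mathcal{O}_{X})$ and functoriality of $F$) and $G$ becomes $M\mapsto M\otimes_{R}F(\mathcal{O}_{X})$. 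Now the classical Eilenberg-Watts argument, carried out with target the Grothendieck category ${\sf Qcoh }Y$, goes through verbatim: the canonical isomorphisms $\mathcal{O}_{X}^{(I)}\otimes_{R}F(\mathcal{O}_{X})\xrightarrow{\ \sim\ }F(\mathcal{O}_{X})^{(I)}\xrightarrow{\ \sim\ }F(\mathcal{O}_{X}^{(I)})$ (using that $F$ preserves coproducts) extend, along presentations $\mathcal{O}_{X}^{(J)}\to\mathcal{O}_{X}^{(I)}\to M\to 0$ and the right exactness of $F$ and $G$, to a natural isomorphism whose inverse is the desired $\Gamma_{F}$.

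For general $X$, fix a finite affine open cover $\{u_{i}\colon U_{i}\hookrightarrow X\}$ (using quasi-compactness) with all intersections $U_{i_{0}\cdots i_{p}}$ affine (using separatedness), and let $C^{\bullet}(\mathcal{N})$ denote the associated augmented \v{C}ech complex $0\to\mathcal{N}\to\bigoplus_{i}u_{i*}u_{i}^{*}\mathcal{N}\to\bigoplus_{i<j}u_{ij*}u_{ij}^{*}\mathcal{N}\to\cdots$. Each $u_{i_{0}\cdots i_{p}}$ is an affine morphism, so $F\circ(u_{i_{0}\cdots i_{p}})_{*}$ lies in ${\sf Bimod}_{k}(U_{i_{0}\cdots i_{p}}-Y)$; applying the affine case to each such functor, and using the compatibility $W(F)|_{U\times Y}\cong W(F\circ u_{*})$ from \cite{N}, one identifies $F(C^{\bullet}(\mathcal{N}))$ (which makes sense since $F$ preserves coproducts) with the \v{C}ech complex of $\operatorname{pr}_{1}^{*}\mathcal{N}\otimes_{\mathcal{O}_{X\times Y}}W(F)$ for the cover $\{U_{i}\times Y\}$ of $X\times Y$; its degree-zero cohomology is precisely $G(\mathcal{N})$. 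Since $C^{\bullet}(\mathcal{N})$ is a complex, $F(\mathcal{N})$ maps canonically to $\operatorname{ker}\bigl(F(C^{0}(\mathcal{N}))\to F(C^{1}(\mathcal{N}))\bigr) = G(\mathcal{N})$, and this is the natural transformation $\Gamma_{F}\colon F\to G$. The technical heart of the proof is to verify that this construction is natural in $\mathcal{N}$ and in $F$ and independent of the cover (via compatibility with refinements), so that $W$ coincides with the functor of \cite{N}.

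For the remaining claim, recall that kernels and cokernels of natural transformations are computed pointwise, so $\operatorname{ker}\Gamma_{F}$ and $\operatorname{cok}\Gamma_{F}$ are totally global if and only if $(\Gamma_{F})_{v_{*}\mathcal{M}}$ is an isomorphism for every affine open immersion $v\colon V\hookrightarrow X$ and every $\mathcal{M}\in{\sf Qcoh }V$. By cover-independence I may take $V = U_{0}$ to belong to the cover; then $C^{\bullet}(v_{*}\mathcal{M})$ is isomorphic to $v_{*}$ applied to the augmented \v{C}ech complex of $\mathcal{M}$ on $V$ for the cover $\{U_{i}\cap V\}_{i}$, and this last complex is \emph{contractible}, precisely because $V$ itself is a member of that cover. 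Hence $C^{\bullet}(v_{*}\mathcal{M})$ is contractible as well, so applying the additive functor $F$ keeps it exact, and therefore the canonical map $F(v_{*}\mathcal{M})\to\operatorname{ker}\bigl(F(C^{0})\to F(C^{1})\bigr) = G(v_{*}\mathcal{M})$---namely $(\Gamma_{F})_{v_{*}\mathcal{M}}$---is an isomorphism. The special case $v = \operatorname{id}_{X}$ re-proves the final sentence.
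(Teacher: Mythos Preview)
The paper does not contain a proof of this theorem: it is quoted verbatim from \cite[Section 6]{N} and used as a black box throughout (the introduction even says ``We will routinely invoke $\Gamma_{F}$, $W(F)$, $\operatorname{ker}\Gamma_{F}$ and $\operatorname{cok}\Gamma_{F}$ from Theorem~\ref{thm.watts} without explicit reference to Theorem~\ref{thm.watts}''). So there is no in-paper argument to compare your proposal against.

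That said, your sketch is a reasonable outline of how one would expect the result in \cite{N} to be proved, and the ingredients you invoke---the affine Eilenberg--Watts theorem with target a Grothendieck category, the compatibility $u^{*}W(F)\cong W(Fu_{*})$ (which the present paper also uses, citing \cite[Proposition 5.2]{N}), and the \v{C}ech gluing---are the right ones. Your final paragraph is also sound: the augmented \v{C}ech complex for a cover containing $V$ itself is contractible, additive functors preserve contractible complexes, and this forces $(\Gamma_{F})_{v_{*}\mathcal{M}}$ to be an isomorphism, which is exactly the totally-global condition on $\operatorname{ker}\Gamma_{F}$ and $\operatorname{cok}\Gamma_{F}$. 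One point you gloss over and flag only as ``bookkeeping'' is the identification of your \v{C}ech-built object with the $W(F)$ actually constructed in \cite{N}; since the present paper relies on specific properties of $W$ proved there (e.g.\ \cite[Proposition 5.2]{N}, \cite[Corollary 7.3]{N}, \cite[Theorem 7.12]{N}), any alternative construction would need to be shown to agree with that one, and this is where the real work in your proposal would lie.
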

The result leaves open the question of whether there is a more precise description of the structure of such functors when $X$ is not affine.  Our structure theorem addresses this question in case $X=\mathbb{P}^{1}_{k}$ and $Y=\operatorname{Spec }k$.  It follows immediately from the structure theorem that a coherence preserving functor $F$ in ${\sf Bimod}_{k}(\mathbb{P}^{1}_{k}-\operatorname{Spec }k)$
is an integral transform, i.e. $\Gamma_{F}$ is an isomorphism, if and only if $F$ is exact on short exact sequences of vector bundles (Corollary \ref{cor.vectorbundles}).

Our second motivation for proving the structure theorem is related to non-commutative algebraic geometry.  The objects of study in this subject are non-commutative spaces, which are certain abelian categories which serve as generalizations of categories of quasi-coherent sheaves on quasi-compact and separated schemes.  Maps between such spaces are taken to be equivalence classes of adjoint pairs of functors between the spaces (see \cite[Definition 2.3]{Smith} for a more precise discussion).  The fact that this notion of map is so general has the benefit of increased flexibility but also has the drawback of deviating too far from its commutative origin.  In order to obtain a more restricted notion of map in the non-commutative setting, it may thus be instructive to study the question of when a map between categories of quasi-coherent sheaves on schemes is isomorphic to $(f^{*},f_{*})$ for some morphism $f$ of schemes.  It follows from our structure theorem that a coherence preserving functor $F$ in ${\sf Bimod}_{k}(\mathbb{P}^{1}_{k}-\operatorname{Spec }k)$ is isomorphic to $f^{*}$ where $f:\operatorname{Spec }k \rightarrow \mathbb{P}_{k}^{1}$ is a morphism of schemes if and only if $F$ is exact on short exact sequences of vector bundles and $\operatorname{dim }_{k}(F(\mathcal{O}(n)))=1$ for some $n$ (Corollary \ref{cor.morphism}).

\medskip

{\it Notation and Conventions}: In addition to the notation introduced above, we let ${\sf Funct}_{k}({\sf Qcoh }X,{\sf Qcoh }Y)$ denote the abelian category of $k$-linear functors from ${\sf Qcoh }X$ to ${\sf Qcoh }Y$.  We routinely use the fact that in this category, a kernel of a natural transformation $\Upsilon:F \rightarrow G$ is the functor which assigns to an object $\mathcal{M}$ the kernel of $\Upsilon_{\mathcal{M}}$ and which assigns to a morphism $\phi:\mathcal{M} \rightarrow \mathcal{N}$ the induced morphism $\operatorname{ker }\Upsilon_{\mathcal{M}} \rightarrow \operatorname{ker }\Upsilon_{\mathcal{N}}$.  A cokernel of $\Upsilon$ can be defined similarly.  We denote the full subcategory of ${\sf Bimod}_{k}(X-Y)$ consisting of coherence preserving functors by ${\sf bimod}_{k}(X-Y)$.

We will routinely invoke $\Gamma_{F}$, $W(F)$, $\Ker$ and $\Cok$ from Theorem \ref{thm.watts} without explicit reference to Theorem \ref{thm.watts}.  We write $\mathbb{P}^{1}$ for $\mathbb{P}_{k}^{1}$.  All unadorned tensor products are over $\mathcal{O}_{\mathbb{P}^{1}}$, and we write $\mathcal{O}(i)$ for $\mathcal{O}_{\mathbb{P}^{1}}(i)$.

We shall abuse notation by identifying ${\sf Qcoh }(\operatorname{Spec }k)$ with the category ${\sf Mod }k$.

Other notation and conventions will be introduced locally.
\medskip

\section{The structure of $W(F)$} \label{section.wf}
 Although some of the properties of the Eilenberg-Watts functor $W$ will play a crucial role in the proof of our main result, the details of the construction of $W$, which are somewhat complicated, will not be needed in what follows.  For this reason, we refer the reader to \cite[Section 5]{N} for the definition of $W$.  Our main goal in this section is to prove that if $F \in {\sf bimod }_{k}(\mathbb{P}^{1}-\operatorname{Spec }k)$ then $W(F)$ is coherent torsion.

\begin{prop} \label{prop.coherent}
If $F \in {\sf bimod }_{k}(\mathbb{P}^{1}-\operatorname{Spec }k)$ then $W(F)$ is coherent torsion.
\end{prop}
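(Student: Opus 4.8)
The plan is to restrict to the standard affine cover $\mathbb{P}^{1}=U_{0}\cup U_{1}$, with $U_{0}\cong\operatorname{Spec}k[t]$ and $U_{1}\cong\operatorname{Spec}k[t^{-1}]$, and to prove that $W(F)(U_{0})$ and $W(F)(U_{1})$ are finite-dimensional over $k$. Since a finite-dimensional module over $k[t]$ (resp.\ $k[t^{-1}]$) is finitely generated and of finite length, a quasi-coherent sheaf on $\mathbb{P}^{1}$ whose sections over $U_{0}$ and over $U_{1}$ are finite-dimensional over $k$ is coherent torsion, so this suffices.

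First I would identify $W(F)(U_{0})$. Let $j\colon U_{0}\hookrightarrow\mathbb{P}^{1}$ be the inclusion. As $j$ is an open immersion with affine source and $\Ker$, $\Cok$ are totally global, they vanish on $j_{*}\mathcal{G}$ for every $\mathcal{G}\in{\sf Qcoh }U_{0}$, so $\Gamma_{F,j_{*}\mathcal{G}}$ is an isomorphism. Taking $\mathcal{G}=\mathcal{O}_{U_{0}}$, invoking the projection formula $j_{*}\mathcal{O}_{U_{0}}\otimes W(F)\cong j_{*}(W(F)|_{U_{0}})$, and recalling that for $Y=\operatorname{Spec}k$ the target of $\Gamma_{F}$ at $\mathcal{M}$ is $H^{0}(\mathbb{P}^{1},\mathcal{M}\otimes W(F))$, one gets a natural isomorphism $F(j_{*}\mathcal{O}_{U_{0}})\cong W(F)(U_{0})$ of $k[t]$-modules. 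Now $j_{*}\mathcal{O}_{U_{0}}=\varinjlim_{n}\mathcal{O}(n\cdot\infty)$ is a filtered colimit of the line bundles $\mathcal{O}(n\cdot\infty)\cong\mathcal{O}(n)$, where, after fixing such isomorphisms, the transition $\mathcal{O}(n)\to\mathcal{O}(n+1)$ is multiplication by the section $x_{0}$ of $\mathcal{O}(1)$ cutting out $\infty:=\mathbb{P}^{1}\setminus U_{0}$. Since $F$ preserves direct limits, $W(F)(U_{0})\cong\varinjlim_{n}F(\mathcal{O}(n))$ along the maps $F(x_{0}\cdot)$, and the $k[t]$-action is the colimit of the maps $F(x_{1}\cdot)$, where $x_{1}$ is the section of $\mathcal{O}(1)$ with $t=x_{1}/x_{0}$.

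The main step is that this colimit is finitely generated over $k[t]$. Let $c_{n}\colon F(\mathcal{O}(n))\to W(F)(U_{0})$ be the structure maps and put $I_{0}:=c_{0}(F(\mathcal{O}))$, finite-dimensional because $F$ is coherence preserving. For $0\le j\le n$ one checks that $c_{n}\circ F(x_{0}^{\,n-j}x_{1}^{\,j}\cdot)=t^{j}c_{0}$ as maps $F(\mathcal{O})\to W(F)(U_{0})$, using that multiplication by $x_{0}$ realizes the colimit transitions and multiplication by $x_{1}$ realizes the $t$-action. Since the $n+1$ monomials of degree $n$ in $x_{0},x_{1}$ have no common zero, $\mathcal{O}^{\oplus(n+1)}\twoheadrightarrow\mathcal{O}(n)$, so right-exactness of $F$ gives $F(\mathcal{O})^{\oplus(n+1)}\twoheadrightarrow F(\mathcal{O}(n))$, whence $c_{n}(F(\mathcal{O}(n)))=\sum_{j=0}^{n}t^{j}I_{0}$. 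Taking the union over $n$ yields $W(F)(U_{0})=k[t]\cdot I_{0}$, finitely generated. By symmetry $W(F)(U_{1})$ is finitely generated over $k[t^{-1}]$, so $W(F)$ is coherent, and we may form the exact sequence $0\to\mathcal{T}\to W(F)\to\mathcal{E}\to0$ with $\mathcal{T}$ the torsion subsheaf (coherent torsion) and $\mathcal{E}$ torsion-free, hence locally free.

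It remains to show $\mathcal{E}=0$. Choose a closed point $p\in U_{0}$ with $p\notin\operatorname{supp}\mathcal{T}$ (possible, as $U_{0}$ is infinite and $\operatorname{supp}\mathcal{T}$ finite) and an integer $M\gg0$. Applying $F$ to $0\to\mathcal{O}(-M-1)\xrightarrow{\ell}\mathcal{O}(-M)\to\mathcal{O}_{p}\to0$, with $\ell$ a linear form vanishing at $p$, right-exactness gives a surjection $F(\mathcal{O}(-M))\twoheadrightarrow F(\mathcal{O}_{p})$. On the other hand $\mathcal{O}_{p}$ is the $j$-pushforward of a skyscraper sheaf on $U_{0}$, so $\Gamma_{F,\mathcal{O}_{p}}$ is an isomorphism onto $H^{0}(\mathbb{P}^{1},\mathcal{O}_{p}\otimes W(F))$, and since $\mathcal{O}_{p}\otimes\mathcal{T}=0$ the latter is $H^{0}(\mathbb{P}^{1},\mathcal{O}_{p}\otimes\mathcal{E})\cong k^{\operatorname{rk}\mathcal{E}}$. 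Naturality of $\Gamma_{F}$ for $\mathcal{O}(-M)\twoheadrightarrow\mathcal{O}_{p}$ forces $F(\mathcal{O}(-M))\to F(\mathcal{O}_{p})$ to be zero, because the induced map $H^{0}(\mathbb{P}^{1},\mathcal{O}(-M)\otimes W(F))\to H^{0}(\mathbb{P}^{1},\mathcal{O}_{p}\otimes W(F))$ vanishes for $M\gg0$: since $H^{0}(\mathbb{P}^{1},\mathcal{E}(-M))=0$ every global section of $W(F)(-M)$ comes from $\mathcal{T}(-M)$, which maps into $\mathcal{O}_{p}\otimes\mathcal{T}=0$. Combining with surjectivity, $F(\mathcal{O}_{p})=0$, so $\operatorname{rk}\mathcal{E}=0$ and $W(F)=\mathcal{T}$ is coherent torsion. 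The main obstacle is the finite-generation step: a priori $\dim_{k}F(\mathcal{O}(n))$ may grow without bound, so coherence of $W(F)$ is not formal --- it rests on the observation that the whole colimit $\varinjlim_{n}F(\mathcal{O}(n))$ is generated, as a $k[t]$-module, by the single finite-dimensional subspace $c_{0}(F(\mathcal{O}))$.
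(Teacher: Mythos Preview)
Your argument is correct, and it follows a genuinely different path from the paper's own proof. A small expository quibble: your opening ``plan'' promises to show that $W(F)(U_0)$ is finite-dimensional over $k$, but what you actually establish first is the weaker statement that it is finitely generated over $k[t]$ (hence $W(F)$ is coherent), and only afterwards do you kill the locally free quotient. The logic is sound; only the summary is slightly off.

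\medskip

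\textbf{Comparison with the paper.} Both proofs begin with the same reduction, identifying $W(F)(U)$ with $F(j_*\mathcal{O}_U)$ for an affine open $j\colon U\hookrightarrow\mathbb{P}^1$ via the fact that $\operatorname{ker}\Gamma_F$ and $\operatorname{cok}\Gamma_F$ are totally global. From there the arguments diverge. The paper uses the local cohomology exact sequence $0\to\mathcal{O}\to j_*\mathcal{O}_U\to\mathcal{H}^1_p(\mathcal{O})\to 0$, writes $\mathcal{H}^1_p(\mathcal{O})$ as a colimit of the thickenings $\mathcal{O}_n=\mathcal{O}/\mathcal{I}^n$ along the natural inclusions, and then observes that the dimensions $\dim_k F(\mathcal{O}_n)$ are nondecreasing and bounded by $\dim_k F(\mathcal{O})$; once they stabilise the surjections $F(\mathcal{O}_j)\to F(\mathcal{O}_i)$ become isomorphisms, forcing the transition maps of the colimit to be zero after applying $F$. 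This yields $F(\mathcal{H}^1_p(\mathcal{O}))=0$ and hence finite-dimensionality of $F(j_*\mathcal{O}_U)$ in a single stroke, giving coherence and torsion simultaneously. Your route instead writes $j_*\mathcal{O}_{U_0}$ as $\varinjlim\mathcal{O}(n)$ along multiplication by $x_0$, and uses the surjection $\mathcal{O}^{\oplus(n+1)}\twoheadrightarrow\mathcal{O}(n)$ by monomials together with right exactness to show that the entire colimit $\varinjlim F(\mathcal{O}(n))$ is generated as a $k[t]$-module by the finite-dimensional image of $F(\mathcal{O})$; this gives coherence but not yet torsion, and you then run a separate argument using naturality of $\Gamma_F$ on $\mathcal{O}(-M)\to\mathcal{O}_p$ for $M\gg 0$ to force the locally free quotient of $W(F)$ to vanish. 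Your approach avoids local cohomology entirely and makes the $k[t]$-module structure of $W(F)(U_0)$ very explicit; the paper's approach is more economical, extracting both conclusions from the single bounded-dimension observation about the $F(\mathcal{O}_n)$.
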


\begin{proof}
We claim that, to prove the proposition, it suffices to prove that if $p \in \mathbb{P}^{1}$ is a closed point, $U$ is the complement of $p$ in $\mathbb{P}^{1}$ and $u:U \rightarrow \mathbb{P}^{1}$ is inclusion, then $F(u_{*}u^{*}\mathcal{O}_{\mathbb{P}^{1}})$ is finite dimensional.  To prove the claim, we first note that
$u^{*}W(F) \cong W(Fu_{*})$ by \cite[Proposition 5.2]{N}, where we have abused notation by identifying $\mathbb{P}^{1} \times_{\operatorname{Spec }k} \operatorname{Spec }k$ with $\mathbb{P}^{1}$.  By the proof of \cite[Proposition 2.2]{N}, $W(Fu_{*})$ has $k$-module structure $F(u_{*}u^{*}\mathcal{O}_{\mathbb{P}^{1}})$.  Therefore, the finite dimensionality of $F(u_{*}u^{*}\mathcal{O}_{\mathbb{P}^{1}})$ would imply the finite dimensionality of $u^{*}W(F)$.  Since $p$ is arbitrary, the fact that $W(F)$ is coherent, and hence torsion, would follow.

We now proceed to prove that $F(u_{*}u^{*}\mathcal{O}_{\mathbb{P}^{1}})$ is finite dimensional, which will establish the result.  To this end, by \cite[Corollary 1.9]{Hart2}, there is a short exact sequence
$$
0 \rightarrow \mathcal{O}_{\mathbb{P}^{1}} \rightarrow u_{*}u^{*}\mathcal{O}_{\mathbb{P}^{1}} \rightarrow \mathcal{H}_{p}^{1}(\mathcal{O}_{\mathbb{P}^{1}}) \rightarrow 0
$$
where $\mathcal{H}_{p}^{1}:{\sf Qcoh }\mathbb{P}^{1} \rightarrow {\sf Qcoh }\mathbb{P}^{1}$ is the functor which sends a quasi-coherent sheaf $\mathcal{F}$ to the first cohomology of $\mathbb{P}^{1}$ with coefficients in $\mathcal{F}$ and support in $p$.  Applying the right exact functor $F$ to this sequence yields an exact sequence
$$
F(\mathcal{O}_{\mathbb{P}^{1}}) \rightarrow F(u_{*}u^{*}\mathcal{O}_{\mathbb{P}^{1}}) \rightarrow F(\mathcal{H}_{p}^{1}(\mathcal{O}_{\mathbb{P}^{1}})) \rightarrow 0.
$$
Thus, to prove that $F(u_{*}u^{*}\mathcal{O}_{\mathbb{P}^{1}})$ is finite dimensional, it suffices to prove that $F(\mathcal{H}_{p}^{1}(\mathcal{O}_{\mathbb{P}^{1}}))$ is zero.

To prove that $F(\mathcal{H}_{p}^{1}(\mathcal{O}_{\mathbb{P}^{1}}))$ vanishes, we first recall that
if $\mathcal{I}$ is the ideal sheaf of $p$ and we let $\mathcal{O}_{n} := \mathcal{O}_{\mathbb{P}^{1}}/\mathcal{I}^{n}$ for $n \geq 1$, then
$$
\mathcal{H}_{p}^{1}(\mathcal{O}_{\mathbb{P}^{1}}) \cong \underset{\rightarrow}{\mbox{lim }}
\mathcal{E}xt_{\mathcal{O}_{\mathbb{P}^{1}}}^{1}(\mathcal{O}_{n},\mathcal{O}_{\mathbb{P}^{1}})
$$
where the direct system is induced by the canonical quotient maps $\mathcal{O}_{n} \rightarrow \mathcal{O}_{m}$ for $n \geq m$ (\cite[Theorem 2.8]{Hart2}).  Thus, the proposition will follow if we can show that $F(\underset{\rightarrow}{\mbox{lim }}
\mathcal{E}xt_{\mathcal{O}_{\mathbb{P}^{1}}}^{1}(\mathcal{O}_{n},\mathcal{O}_{\mathbb{P}^{1}})) \cong \underset{\rightarrow}{\mbox{lim }}F(
\mathcal{E}xt_{\mathcal{O}_{\mathbb{P}^{1}}}^{1}(\mathcal{O}_{n},\mathcal{O}_{\mathbb{P}^{1}}))$
is zero.  To prove this final fact, we first leave it as a straightforward exercise to show that $\underset{\rightarrow}{\mbox{lim }}
\mathcal{E}xt_{\mathcal{O}_{\mathbb{P}^{1}}}^{1}(\mathcal{O}_{n},\mathcal{O}_{\mathbb{P}^{1}})$ is isomorphic to the direct limit of the direct system $(\mathcal{O}_{i},\mu_{ij})$ where for $i < j$, $\mu_{ij}:\mathcal{O}_{i} \rightarrow \mathcal{O}_{j}$ is the inclusion of $\mathcal{O}_{i}$ as the kernel of the canonical quotient $\mathcal{O}_{j} \rightarrow \mathcal{O}_{j-i}$ and $\mu_{ii}$ is the identity map.  We next note that since there are epimorphisms $\mathcal{O}_{j} \rightarrow \mathcal{O}_{i}$ for all $i \leq j$ and $\mathcal{O}_{\mathbb{P}^{1}} \rightarrow \mathcal{O}_{i}$ for all $i$ and since $F$ is right exact, $\mbox{dim}_{k}F(\mathcal{O}_{i}) \leq \mbox{dim}_{k}F(\mathcal{O}_{j}) \leq \mbox{dim}_{k}F(\mathcal{O}_{\mathbb{P}^{1}})$ for all $i \leq j$.  It follows from this that there exists a natural number $n_{0}$ such that $\mbox{dim}_{k}F(\mathcal{O}_{i})=\mbox{dim}_{k}F(\mathcal{O}_{j})$ for all $i,j \geq n_{0}$.  Therefore, for $i,j$ such that $n_{0} \leq i \leq j$, the canonical quotient map $\mathcal{O}_{j} \rightarrow \mathcal{O}_{i}$ induces an isomorphism $F(\mathcal{O}_{j}) \rightarrow F(\mathcal{O}_{i})$, and thus, in this case, the canonical inclusion $\mu_{j-i,j}$ induces the zero map $F(\mathcal{O}_{j-i}) \rightarrow F(\mathcal{O}_{j})$.  It follows that $\underset{\rightarrow}{\mbox{lim }}F(\mathcal{O}_{n})$ is zero, so that $F(\mathcal{H}_{p}^{1}(\mathcal{O}_{\mathbb{P}^{1}}))$ is zero.  The result follows.
\end{proof}

\begin{cor} \label{cor.cokvanish}
If $F \in {\sf bimod }_{k}(\mathbb{P}^{1}-\operatorname{Spec }k)$, then $\Cok=0$.  Thus, there is a short exact sequence
$$
0 \rightarrow \Ker \rightarrow F \overset{\Gamma_{F}}{\rightarrow} H^{0}(\mathbb{P}^{1},-\otimes W(F)) \rightarrow 0
$$
in ${\sf Funct}_{k}({\sf Qcoh }\mathbb{P}^{1},{\sf Qcoh }(\operatorname{Spec }k))$.
\end{cor}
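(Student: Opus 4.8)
The plan is to show that the natural transformation $\Gamma_F$ is an objectwise epimorphism; the corollary then follows at once, since $\Ker$ is by definition the objectwise kernel of $\Gamma_F$, kernels and cokernels in ${\sf Funct}_k({\sf Qcoh }\mathbb{P}^1,{\sf Qcoh }(\operatorname{Spec }k))$ are computed objectwise, and (by Proposition \ref{prop.coherent}, together with the identification of $\mathbb{P}^1\times_{\operatorname{Spec }k}\operatorname{Spec }k$ with $\mathbb{P}^1$) the target of $\Gamma_F$ is $\operatorname{pr}_{2*}(\operatorname{pr}_1^{*}-\otimes W(F))=H^0(\mathbb{P}^1,-\otimes W(F))$. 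First I would record that $\Cok$, being the objectwise cokernel of a natural transformation between the right exact, direct limit preserving functors $F$ and $H^0(\mathbb{P}^1,-\otimes W(F))$, is again right exact and direct limit preserving (cokernels are right exact on the arrow category of ${\sf Mod }k$, and filtered colimits commute with cokernels). Since every quasi-coherent sheaf on $\mathbb{P}^1$ is a direct limit of coherent ones, and every coherent sheaf on $\mathbb{P}^1$ fits in an exact sequence $\mathcal{O}(a)^{\oplus r}\to\mathcal{O}(b)^{\oplus s}\to\mathcal{M}\to 0$ with $r,s$ finite, these two properties (and additivity of $\Cok$) reduce the task to proving $\Cok(\mathcal{O}(n))=0$, i.e. surjectivity of $\Gamma_{F,\mathcal{O}(n)}$, for every $n\in\mathbb{Z}$.

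To prove this, fix $n$. By Proposition \ref{prop.coherent}, $W(F)$ is coherent torsion; if $W(F)=0$ there is nothing to prove, so assume its support $Z$ is a nonempty finite set of closed points. As $k$ is algebraically closed, $\mathbb{P}^1$ has infinitely many closed points, so we may pick a closed point $q\notin Z$; set $U=\mathbb{P}^1\setminus\{q\}$ (an affine open), let $u:U\to\mathbb{P}^1$ be the inclusion, and let $\eta:\mathcal{O}(n)\to u_{*}u^{*}\mathcal{O}(n)$ be the unit. The surjectivity of $\Gamma_{F,\mathcal{O}(n)}$ will be extracted from the naturality square of $\Gamma_F$ along $\eta$,
\[
\begin{array}{ccc}
F(\mathcal{O}(n)) & \xrightarrow{\ \Gamma_{F,\mathcal{O}(n)}\ } & H^0(\mathbb{P}^1,\mathcal{O}(n)\otimes W(F))\\
\downarrow & & \downarrow\\
F(u_{*}u^{*}\mathcal{O}(n)) & \xrightarrow{\ \Gamma_{F,u_{*}u^{*}\mathcal{O}(n)}\ } & H^0(\mathbb{P}^1,u_{*}u^{*}\mathcal{O}(n)\otimes W(F))
\end{array}
\]
whose vertical maps are $F(\eta)$ on the left and $H^0(\mathbb{P}^1,\eta\otimes W(F))$ on the right, using three facts. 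First, $\Gamma_{F,u_{*}u^{*}\mathcal{O}(n)}$ is surjective: by Theorem \ref{thm.watts}, $\Cok$ is totally global, and $U$ is affine, so $\Cok(u_{*}u^{*}\mathcal{O}(n))=\Cok(u_{*}(u^{*}\mathcal{O}(n)))=0$. Second, $F(\eta)$ is surjective: tensoring the short exact sequence of \cite[Corollary 1.9]{Hart2} for the point $q$ with the invertible sheaf $\mathcal{O}(n)$ and applying the projection formula for $u$ yields a short exact sequence $0\to\mathcal{O}(n)\xrightarrow{\eta}u_{*}u^{*}\mathcal{O}(n)\to\mathcal{H}_{q}^{1}(\mathcal{O}_{\mathbb{P}^1})\otimes\mathcal{O}(n)\to 0$; since $\mathcal{H}_{q}^{1}(\mathcal{O}_{\mathbb{P}^1})$ is supported at $q$, where $\mathcal{O}(n)$ is trivial, $\mathcal{H}_{q}^{1}(\mathcal{O}_{\mathbb{P}^1})\otimes\mathcal{O}(n)\cong\mathcal{H}_{q}^{1}(\mathcal{O}_{\mathbb{P}^1})$, so applying the right exact $F$ together with the vanishing $F(\mathcal{H}_{q}^{1}(\mathcal{O}_{\mathbb{P}^1}))=0$ established in the proof of Proposition \ref{prop.coherent} gives $\operatorname{cok}F(\eta)=0$. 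Third, $H^0(\mathbb{P}^1,\eta\otimes W(F))$ is an isomorphism: $\eta$ restricts to the identity over $U\supseteq Z$, so $\eta\otimes W(F)$ is an isomorphism on stalks at every point of $Z$, while both its source and target vanish off $Z$; hence $\eta\otimes W(F)$ is an isomorphism of sheaves, and so is its image under $H^0(\mathbb{P}^1,-)$. Now the composite down-then-across, $\Gamma_{F,u_{*}u^{*}\mathcal{O}(n)}\circ F(\eta)$, is surjective by the first two facts; by commutativity it equals $H^0(\mathbb{P}^1,\eta\otimes W(F))\circ\Gamma_{F,\mathcal{O}(n)}$, and since the left factor is an isomorphism, $\Gamma_{F,\mathcal{O}(n)}$ is surjective.

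Combining this with the reduction of the first paragraph gives $\Cok=0$, and hence the asserted short exact sequence $0\to\Ker\to F\xrightarrow{\Gamma_{F}}H^0(\mathbb{P}^1,-\otimes W(F))\to 0$ in ${\sf Funct}_k({\sf Qcoh }\mathbb{P}^1,{\sf Qcoh }(\operatorname{Spec }k))$. The conceptual crux, and the only step requiring genuine care, is the choice of the puncture $q$ outside $\operatorname{Supp}W(F)$: this is precisely what makes the target functor $H^0(\mathbb{P}^1,-\otimes W(F))$ unable to distinguish $\mathcal{O}(n)$ from $u_{*}u^{*}\mathcal{O}(n)$ (the third fact), so that the surjectivity of $\Gamma_F$ on $u_{*}u^{*}\mathcal{O}(n)$ forced by total globalness of $\Cok$ can be transported back to $\mathcal{O}(n)$; the ingredient that bridges the two sides is the already-available vanishing $F(\mathcal{H}_{q}^{1}(\mathcal{O}_{\mathbb{P}^1}))=0$. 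The remaining points — right exactness and direct limit preservation of $\Cok$, and the reduction to line bundles — are routine.
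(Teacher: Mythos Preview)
Your argument is correct, but it takes a genuinely different route from the paper's. Both proofs begin by noting that $\Cok$ is right exact and direct limit preserving (since $W(F)$ coherent torsion makes $H^{0}(\mathbb{P}^{1},-\otimes W(F))$ right exact), but from there they diverge. The paper invokes the classification of totally global right exact direct limit preserving functors from \cite[Corollary 7.3, Theorem 7.12]{N} to write $\Cok\cong\bigoplus_{i=-m}^{\infty}H^{1}(\mathbb{P}^{1},(-)(i))^{\oplus n_{i}}$, and then uses a dimension count: if $\Cok\neq 0$ then $\dim_{k}\Cok(\mathcal{O}(i))$ would be unbounded as $i\to -\infty$, contradicting the surjection from $H^{0}(\mathbb{P}^{1},\mathcal{O}(i)\otimes W(F))$, which has constant dimension. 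Your proof bypasses that classification entirely: you push $\mathcal{O}(n)$ along the unit $\eta$ to $u_{*}u^{*}\mathcal{O}(n)$ for a puncture $q$ chosen outside $\operatorname{Supp}W(F)$, so that total globalness forces $\Gamma_{F}$ to be epic there, and then pull the surjectivity back using that $\eta\otimes W(F)$ is an isomorphism and that $F(\eta)$ is epic (the latter via the vanishing $F(\mathcal{H}_{q}^{1}(\mathcal{O}_{\mathbb{P}^{1}}))=0$ obtained inside the proof of Proposition~\ref{prop.coherent}). Your approach is more elementary and self-contained, avoiding the external structure theorem from \cite{N}; the trade-off is that you rely on an intermediate step from the \emph{proof} of Proposition~\ref{prop.coherent} rather than only its statement, and you need the projection-formula identification $(u_{*}u^{*}\mathcal{O}_{\mathbb{P}^{1}})\otimes\mathcal{O}(n)\cong u_{*}u^{*}\mathcal{O}(n)$ compatibly with the unit, which you handle correctly.
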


\begin{proof}
Since $W(F)$ is coherent torsion by Proposition \ref{prop.coherent}, the functor $H^{0}(\mathbb{P}^{1},-\otimes W(F))$ is right exact and commutes with direct limits.  Thus, a diagram chase establishes that $\Cok$ is right exact and commutes with direct limits as well.  Since $k$ is algebraically closed, it follows from \cite[Corollary 7.3]{N} and \cite[Theorem 7.12]{N} that there exist integers $m,
n_{i} \geq 0$ such that
$$
\Cok \cong \oplus_{i=-m}^\infty {H}^{1}(\mathbb{P}^{1},(-)(i))^{\oplus n_{i}}.
$$
Thus, if $\Cok$ were nonzero, $\Cok(\mathcal{O}(i))$ would have arbitrarily large dimension for negative $i \in \mathbb{Z}$.  On the other hand, $H^{0}(\mathbb{P}^{1},\mathcal{O}(i) \otimes W(F))$ has constant dimension.  Since there is an epimorphism $H^{0}(\mathbb{P}^{1},\mathcal{O}(i) \otimes W(F)) \rightarrow \Cok(\mathcal{O}(i))$, we conclude that $\Cok=0$.
\end{proof}

We end this section by describing a situation we shall encounter several times throughout this note.  Suppose $F \in {\sf Funct}_{k}({\sf Qcoh }X, {\sf Qcoh }(\operatorname{Spec }k))$ is right exact.  If
\begin{equation} \label{eqn.ses}
0 \rightarrow \mathcal{M} \rightarrow \mathcal{N} \rightarrow \mathcal{P} \rightarrow 0
\end{equation}
is a short exact sequence in ${\sf Qcoh }X$, $\mathcal{G}$ is an object in ${\sf Qcoh }X$ and $\Upsilon:F \rightarrow H^{0}(X,- \otimes_{\mathcal{O}_{X}} \mathcal{G})$ is a natural transformation, then we have a commutative (but not necessarily exact) diagram
\begin{equation} \label{eqn.diag1}
\begin{CD}
& & F(\mathcal{M}) & \rightarrow & F(\mathcal{N}) & \rightarrow & F(\mathcal{P}) & \rightarrow & 0 \\
& & @VVV @VVV @VVV & \\
0 & \rightarrow & H^{0}(X,\mathcal{M} \otimes_{\mathcal{O}_{X}} \mathcal{G}) & \rightarrow & H^{0}(X,\mathcal{N} \otimes_{\mathcal{O}_{X}} \mathcal{G}) & \rightarrow & H^{0}(X,\mathcal{P} \otimes_{\mathcal{O}_{X}} \mathcal{G}) &
\end{CD}
\end{equation}
induced by $\Upsilon$.  The top row is always exact, while the bottom row is exact in case $\mathcal{P}$ is flat.  If the bottom row is exact, the Snake Lemma implies that there is an exact sequence
\begin{equation} \label{eqn.snake}
\operatorname{ker }\Upsilon(\mathcal{M}) \rightarrow \operatorname{ker }\Upsilon (\mathcal{N}) \rightarrow \operatorname{ker }\Upsilon(\mathcal{P}) \rightarrow
\end{equation}
$$\operatorname{cok }\Upsilon(\mathcal{M}) \rightarrow \operatorname{cok }\Upsilon(\mathcal{N}) \rightarrow \operatorname{cok }\Upsilon(\mathcal{P})
$$
in ${\sf Qcoh }(\operatorname{Spec }k)$.

\section{The structure of $\operatorname{ker }\Gamma_{F}$} \label{section.ker}
Throughout this section, suppose $F \in {\sf bimod }_{k}(\mathbb{P}^{1}-\operatorname{Spec }k)$ is such that $W(F)$ is nonzero.  Since, by Proposition \ref{prop.coherent}, $W(F)$ is coherent torsion, it has finite support.  For the remainder of this note, we let $p \in \mathbb{P}^{1}$ be a closed point outside the support of $W(F)$ and we let $q \in \mathbb{P}^{1}$ be a closed point in the support of $W(F)$.

Let $A=k[x_{0},x_{1}]$ denote the polynomial ring with the usual grading, let $[-]$ denote the shift functor in the category of graded $A$ modules, and let $f,g:A[-1] \rightarrow A$ denote multiplication by linear forms whose corresponding morphisms in ${\sf Qcoh}\mathbb{P}^{1}$, $\alpha_{0}:\mathcal{O}(-1) \rightarrow \mathcal{O}$ and $\beta_{0}:\mathcal{O}(-1) \rightarrow \mathcal{O}$, have cokernels $k(p)$ and $k(q)$, respectively. For any morphism $\delta_{0}:\mathcal{O}(-1) \rightarrow \mathcal{O}$ corresponding to multiplication by a linear form $h:A[-1] \rightarrow A$, we define $\delta_{i}:\mathcal{O}(i-1) \rightarrow \mathcal{O}(i)$ to be the morphism corresponding to the $i$th shift of $h$.

\begin{lemma} \label{lemma.alphabetaepi}
\begin{enumerate}
\item The morphism $\Ker(\alpha_{i})$ is epic.  Thus, there is a nonnegative integer $m$ such that $\operatorname{dim }_{k}(\Ker(\mathcal{O}(i)))=m$ for all sufficiently large $i$.

\item The image of the morphism $\Ker(\beta_{j-1})$ contains
$$
(\Ker(\alpha_{j}))^{-1}(\operatorname{im }(\Ker(\beta_{j}))).
$$
Therefore, if $\Ker(\beta_{i})$ is epic then $\Ker(\beta_{j})$ is epic for all $j \leq i$.
\end{enumerate}
\end{lemma}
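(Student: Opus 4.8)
The plan is to derive both parts from the Snake Lemma discussion at the end of Section \ref{section.wf}, applied to two judiciously chosen short exact sequences, together with the vanishing $\Cok=0$ (Corollary \ref{cor.cokvanish}) and the total globalness of $\Ker$ (Theorem \ref{thm.watts}).

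For part (1), I would apply that discussion to the short exact sequence
$$
0\rightarrow\mathcal{O}(i-1)\overset{\alpha_{i}}{\rightarrow}\mathcal{O}(i)\rightarrow k(p)\rightarrow 0
$$
with $\mathcal{G}=W(F)$ and $\Upsilon=\Gamma_{F}$. The sheaf $k(p)$ is not flat, so the bottom row of \eqref{eqn.diag1} does not come for free; but since $p$ lies outside the support of $W(F)$, the morphism $\alpha_{i}$ is multiplication by a linear form invertible at every point of that support, so $\alpha_{i}\otimes W(F)$ is an isomorphism and $k(p)\otimes W(F)=0$. Hence the bottom row of \eqref{eqn.diag1} reduces to the isomorphism $H^{0}(\mathbb{P}^{1},\mathcal{O}(i-1)\otimes W(F))\cong H^{0}(\mathbb{P}^{1},\mathcal{O}(i)\otimes W(F))$ induced by $\alpha_{i}$, hence is exact, so \eqref{eqn.snake} applies. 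Feeding in $\Cok=0$, the sequence \eqref{eqn.snake} collapses to exactness of
$$
\Ker(\mathcal{O}(i-1))\rightarrow\Ker(\mathcal{O}(i))\rightarrow\Ker(k(p))\rightarrow 0,
$$
where the first map is $\Ker(\alpha_{i})$, so it remains to check that $\Ker(k(p))=0$. This is where total globalness enters: $p$ has an affine open neighborhood $v:V\rightarrow\mathbb{P}^{1}$ (say the complement of any closed point distinct from $p$), and $k(p)\cong v_{*}v^{*}k(p)$, whence $\Ker(k(p))\cong(\Ker\circ v_{*})(v^{*}k(p))=0$. Thus $\Ker(\alpha_{i})$ is epic. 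The remaining statement of (1) is then formal: $F$ is coherence preserving, so each $\Ker(\mathcal{O}(i))$ is a finite-dimensional subspace of $F(\mathcal{O}(i))$, and since $\Ker(\alpha_{i})$ is epic, $\operatorname{dim}_{k}\Ker(\mathcal{O}(i))$ is nonincreasing in $i$, hence eventually constant, say equal to some $m\geq 0$.

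For part (2), I would apply the same discussion instead to the Koszul short exact sequence
$$
0\rightarrow\mathcal{O}(j-2)\rightarrow\mathcal{O}(j-1)^{\oplus 2}\rightarrow\mathcal{O}(j)\rightarrow 0,
$$
whose surjection $\mathcal{O}(j-1)^{\oplus 2}\rightarrow\mathcal{O}(j)$ restricts to $\alpha_{j}$ and $\beta_{j}$ on the two summands and whose injection $\mathcal{O}(j-2)\rightarrow\mathcal{O}(j-1)^{\oplus 2}$ has components $\beta_{j-1}$ and $-\alpha_{j-1}$; this sequence is exact because the linear forms defining $\alpha$ and $\beta$ have no common zero on $\mathbb{P}^{1}$ (as $p\neq q$). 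Since $\mathcal{O}(j)$ is flat, the bottom row of \eqref{eqn.diag1} is exact, and combining \eqref{eqn.snake} with $\Cok=0$ and the additivity of $\Ker$ gives exactness of
$$
\Ker(\mathcal{O}(j-2))\rightarrow\Ker(\mathcal{O}(j-1))^{\oplus 2}\rightarrow\Ker(\mathcal{O}(j)),
$$
in which the first map has components $\Ker(\beta_{j-1})$ and $-\Ker(\alpha_{j-1})$ and the second sends $(a,b)$ to $\Ker(\alpha_{j})(a)+\Ker(\beta_{j})(b)$. Now if $z\in\Ker(\mathcal{O}(j-1))$ satisfies $\Ker(\alpha_{j})(z)=\Ker(\beta_{j})(w)$ for some $w$, then $(z,-w)$ lies in the kernel of the second map, hence in the image of the first, so $z=\Ker(\beta_{j-1})(v)$ for some $v$; this is the asserted containment. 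The final clause then follows by downward induction on $j$: if $\Ker(\beta_{i})$ is epic, then $(\Ker(\alpha_{i}))^{-1}(\operatorname{im}(\Ker(\beta_{i})))=\Ker(\mathcal{O}(i-1))$, so the containment just proved shows $\Ker(\beta_{i-1})$ is epic.

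The only genuinely nonformal ingredient is the vanishing $\Ker(k(p))=0$, which rests on the total globalness half of Theorem \ref{thm.watts}; everything else is a matter of choosing the right short exact sequences and running the Snake Lemma. The step needing the most care is the verification, in part (1), that the bottom row of \eqref{eqn.diag1} stays exact even though $k(p)$ is not flat — and this is precisely where the choice of $p$ outside the support of $W(F)$ is used.
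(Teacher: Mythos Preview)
Your proof is correct and follows essentially the same approach as the paper: the same two short exact sequences (the $\alpha_i$-sequence for part (1) and the Koszul sequence for part (2)) fed through the Snake Lemma discussion at the end of Section~\ref{section.wf}, invoking total globalness of $\Ker$ for $\Ker(k(p))=0$ and $\Cok=0$ for exactness on the right. You are in fact more explicit than the paper in justifying why the bottom row of \eqref{eqn.diag1} stays exact in part (1) despite $k(p)$ not being flat, and in spelling out why total globalness forces $\Ker(k(p))=0$; the only cosmetic differences are your harmless alternate sign convention in the Koszul sequence and your (redundant but not incorrect) invocation of $\Cok=0$ in part (1).
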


\begin{proof}
The sequence (\ref{eqn.snake}) in the case (\ref{eqn.ses}) is
$$
0 \rightarrow \mathcal{O}(i-1) \overset{\alpha_{i}}{\rightarrow} \mathcal{O}(i) \rightarrow k(p) \rightarrow 0,
$$
$\Upsilon=\Gamma_{F}$, and $\mathcal{G}=W(F)$, has an exact subsequence
$$
\Ker(\mathcal{O}(i-1)) \overset{\Ker(\alpha_{i})}{\rightarrow} \Ker(\mathcal{O}(i)) \rightarrow 0
$$
since $\Ker$ is totally global by Theorem \ref{thm.watts}.  Therefore, $\Ker(\alpha_{i})$ is epic.  The second statement in (1) follows from this.

To prove (2), we note that in the case (\ref{eqn.ses}) is
\begin{equation} \label{eqn.vect}
0 \longrightarrow \mathcal{O}(j-2)
\overset{(\alpha_{j-1},-\beta_{j-1})}{\longrightarrow}
\mathcal{O}(j-1)^{\oplus 2}
\overset{\beta_{j}+\alpha_{j}}{\longrightarrow} \mathcal{O}(j)
\longrightarrow 0,
\end{equation}
$\Upsilon=\Gamma_{F}$, and $\mathcal{G}=W(F)$, the sequence (\ref{eqn.snake}) has an exact subsequence
\begin{equation} \label{eqn.kerapplied}
\Ker(\mathcal{O}(j-2))
\overset{(\Ker(\alpha_{j-1}),-\Ker(\beta_{j-1}))}{\longrightarrow}
\end{equation}
$$
\Ker(\mathcal{O}(j-1))^{\oplus 2}
\overset{\Ker(\beta_{j})+\Ker(\alpha_{j})}{\longrightarrow} \Ker(\mathcal{O}(j))
\longrightarrow 0
$$
by Corollary \ref{cor.cokvanish}.  If $a \in (\Ker(\alpha_{j}))^{-1}(\operatorname{im }(\Ker(\beta_{j})))$, then there exists a $b \in \Ker(\mathcal{O}(j-1))$ such that $\Ker(\alpha_{j})(a)=\Ker(\beta_{j})(b)$. Therefore, by exactness of (\ref{eqn.kerapplied}), $(b,-a)$ is in the image of $(\Ker(\alpha_{j-1}),-\Ker(\beta_{j-1}))$, so that $a$ is in the image of $\Ker(\beta_{j-1})$.  The first part of (2) follows.  To prove the second part of (2), we note that if $\Ker(\beta_{i})$ is epic, then the first part of (2) implies that $(\Ker(\alpha_{i}))^{-1}(\operatorname{im }(\Ker(\beta_{i})))=\Ker(\mathcal{O}(i-1))$ so that $\Ker(\beta_{i-1})$ is epic as well.
\end{proof}

\begin{cor} \label{cor.kernelstructure}
If $m=0$ in Lemma \ref{lemma.alphabetaepi}(1), then $\Ker(\beta_{i})$ is epic for all $i$.  It follows that there exist integers $n,
n_{i} \geq 0$ such that
$$
\Ker \cong \oplus_{i=-n}^\infty {H}^{1}(\mathbb{P}^{1},(-)(i))^{\oplus n_{i}}.
$$
\end{cor}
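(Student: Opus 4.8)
The plan is first to record the claim that $\Ker(\beta_{i})$ is epic for all $i$. Since $m=0$ in Lemma \ref{lemma.alphabetaepi}(1), there is an integer $N$ with $\Ker(\mathcal{O}(i))=0$ for every $i\ge N$; for such $i$ the map $\Ker(\beta_{i}):\Ker(\mathcal{O}(i-1))\to\Ker(\mathcal{O}(i))$ has zero target, so it is epic, and the second assertion of Lemma \ref{lemma.alphabetaepi}(2) then forces $\Ker(\beta_{j})$ to be epic for every $j\in\mathbb{Z}$. For the structural conclusion, the plan is to check that $\Ker$ satisfies the hypotheses of \cite[Corollary 7.3]{N} and \cite[Theorem 7.12]{N} and then to quote those results exactly as $\Cok$ is treated in the proof of Corollary \ref{cor.cokvanish}: one needs $\Ker$ to be $k$-linear, to commute with direct limits, to be coherence preserving, to be totally global, and to be right exact. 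Total globality is part of Theorem \ref{thm.watts}; and from the short exact sequence $0\to\Ker\to F\to H^{0}(\mathbb{P}^{1},-\otimes W(F))\to 0$ of Corollary \ref{cor.cokvanish} one reads off that $\Ker$ commutes with direct limits (since $F$ and $H^{0}(\mathbb{P}^{1},-\otimes W(F))$ do, the latter because $W(F)$ is coherent, and filtered colimits are exact in ${\sf Mod }k$) and that $\Ker$ is coherence preserving (since $\Ker(\mathcal{M})$ is a subspace of the finite dimensional space $F(\mathcal{M})$ for coherent $\mathcal{M}$). Granting right exactness, \cite[Corollary 7.3]{N} and \cite[Theorem 7.12]{N}, applicable since $k$ is algebraically closed, then supply $n,n_{i}\ge 0$ with $\Ker\cong\bigoplus_{i=-n}^{\infty}H^{1}(\mathbb{P}^{1},(-)(i))^{\oplus n_{i}}$.

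It remains to prove that $\Ker$ is right exact, which is where the epic claim is used. Since $\Ker$ commutes with direct limits and every short exact sequence in ${\sf Qcoh }\mathbb{P}^{1}$ is a filtered colimit of short exact sequences of coherent sheaves, it suffices to treat a short exact sequence $0\to\mathcal{M}\to\mathcal{N}\to\mathcal{P}\to 0$ of coherent sheaves. Total globality forces $\Ker$ to vanish on every coherent torsion sheaf (such a sheaf is $u_{*}$ of its restriction to the complement of any closed point off its support, which is affine), and applying the Snake Lemma to $0\to\mathcal{T}\to\mathcal{F}\to\mathcal{F}/\mathcal{T}\to 0$ — with $\mathcal{T}$ the torsion subsheaf, the quotient locally free, and $\Cok=0$ — yields a natural isomorphism $\Ker(\mathcal{F})\cong\Ker(\mathcal{F}/\mathcal{T})$. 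When $\mathcal{P}$ is locally free the bottom row of (\ref{eqn.diag1}) (with $\Upsilon=\Gamma_{F}$, $\mathcal{G}=W(F)$) is exact, so (\ref{eqn.snake}) and $\Cok=0$ give exactness of $\Ker(\mathcal{M})\to\Ker(\mathcal{N})\to\Ker(\mathcal{P})\to 0$. For general $\mathcal{P}$ one would split off the torsion subsheaf of $\mathcal{P}$ and filter it by skyscrapers, breaking the given sequence into one with locally free quotient together with a chain of elementary modifications $0\to\mathcal{A}\to\mathcal{B}\to k(r)\to 0$, and a short diagram chase (using $\Cok=0$) shows that right exactness of $\Ker$ on the pieces yields it on the whole. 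Finally, for such a modification the isomorphism $\Ker(-)\cong\Ker((-)/(-)_{\mathrm{tors}})$ reduces to the case where $\mathcal{A},\mathcal{B}$ are finite direct sums of line bundles; then, up to automorphisms of $\mathcal{A}$ and $\mathcal{B}$, the inclusion has the form $\mathrm{id}\oplus\delta_{i}$ with $\delta_{i}$ multiplication by the linear form cutting out $r$, so it suffices that $\Ker(\delta_{i})$ be epic for every closed point $r$ and every $i$. For $r$ off the support of $W(F)$ this is the argument of Lemma \ref{lemma.alphabetaepi}(1); for $r$ in the support it follows by running the arguments of Lemma \ref{lemma.alphabetaepi}(2) and of the epic claim above with $r$ in place of $q$, which is legitimate because $p\ne r$ keeps the Koszul sequence (\ref{eqn.vect}) a resolution and because the hypothesis $m=0$ singles out no particular point of the support.

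The main obstacle is the right exactness of $\Ker$ itself. Unlike $\Cok$, which is automatically right exact as the cokernel of a natural transformation of right exact functors, the \emph{kernel} of such a transformation need not be; what makes it work here is precisely that all the maps $\Ker(\alpha_{i})$ and $\Ker(\beta_{i})$ are epic — equivalently, that $\Ker$ is surjective on elementary modifications at every closed point — and this is exactly the place where the hypothesis $m=0$ is indispensable. The book-keeping needed to reduce an arbitrary coherent short exact sequence to elementary modifications, while routine, is the most delicate point of the argument.
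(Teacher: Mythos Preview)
Your argument for the first assertion—that $\Ker(\beta_i)$ is epic for all $i$—matches the paper's exactly: once $\Ker(\mathcal{O}(i))=0$ for large $i$, the map $\Ker(\beta_i)$ is vacuously epic, and Lemma~\ref{lemma.alphabetaepi}(2) propagates this downward.

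For the structural conclusion, however, you take a genuinely different route from the paper. The paper does \emph{not} attempt to show that $\Ker$ is right exact. Instead it verifies directly that $\Ker$ is \emph{admissible} in the sense of \cite[Definition~7.1]{N}: it checks that $\Ker$ is half-exact on short exact sequences of vector bundles (via (\ref{eqn.snake})), that $\Ker(\gamma)$ is epic for every nonzero $\gamma\in\operatorname{Hom}(\mathcal{O}(m),\mathcal{O}(n))$ (by factoring $\gamma$ into degree-one steps and noting that every closed point is either a $p$ or a $q$), and that $\Ker$ commutes with direct limits. Then \cite[Theorem~7.12]{N} alone gives the decomposition. This is strictly less than what you prove, and correspondingly cheaper: admissibility only asks about vector-bundle sequences and line-bundle maps, so no reduction of general coherent short exact sequences is needed.

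Your approach—establishing full right exactness so as to invoke \cite[Corollary~7.3]{N} and \cite[Theorem~7.12]{N} exactly as for $\Cok$—is correct, but costs you the extra bookkeeping you flag at the end. Two of those steps deserve a word of caution. First, when you pass from an elementary modification $0\to\mathcal{A}\to\mathcal{B}\to k(r)\to 0$ to the torsion-free quotients, the induced map $\mathcal{A}/\mathcal{A}_{\mathrm{tors}}\to\mathcal{B}/\mathcal{B}_{\mathrm{tors}}$ need not have cokernel $k(r)$: if $\mathcal{B}_{\mathrm{tors}}\to k(r)$ is already surjective the induced map is an isomorphism, and only otherwise does the cokernel remain $k(r)$. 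Both cases are harmless for your purposes, but the dichotomy should be noted. Second, the claim that such a torsion-free elementary modification is, up to automorphisms, of the form $\mathrm{id}\oplus\delta_i$ is true but not entirely trivial; it is a graded Smith-normal-form statement whose proof uses that $H^0(\mathcal{O}(d))\to\mathcal{O}(d)|_r$ is surjective for $d\ge 0$, and a short induction on rank. With these points filled in, your argument goes through; the paper's route simply avoids them by appealing to the weaker hypothesis of admissibility.
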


\begin{proof}
Either $\Ker(\mathcal{O}(n)) = 0$ for all $n$ (in which case the last result is clear) or there exists $n_{0}$ minimal such that $\Ker(\mathcal{O}(i))=0$ for all $i \geq n_{0}$.  Therefore, $\Ker(\beta_{i})$ is trivially epic for all $i \geq n_{0}$.  The first result now follows immediately from Lemma \ref{lemma.alphabetaepi}(2).

To prove the second result, we prove that $\Ker$ is admissible \cite[Definition 7.1]{N}.  To this end, we note that it is half-exact on vector bundles by the exactness of (\ref{eqn.snake}) in case (\ref{eqn.ses}) is an exact sequence of vector bundles, $\Upsilon=\Gamma_{F}$ and $\mathcal{G}=W(F)$.  We next note that $\Ker(\alpha_{i})$ is epic for all $i$ by Lemma \ref{lemma.alphabetaepi}(1) while $\Ker(\beta_{i})$ is epic for all $i$ by the argument in the first paragraph. Since $p$ is an arbitrary point outside the support of $W(F)$ while $q$ is an arbitrary point in the support of $W(F)$, and since any nonzero map $\delta: \mathcal{O}(i-1) \rightarrow \mathcal{O}(i)$ has cokernel $k(r)$ for some closed point $r \in \mathbb{P}^{1}$, it follows that $\Ker(\delta)$ is epic.  Therefore, since $k$ is algebraically closed, $\Ker(\gamma)$ is epic for all nonzero $\gamma \in \operatorname{Hom}_{\mathcal{O}_{\mathbb{P}^{1}}}(\mathcal{O}(m),\mathcal{O}(n))$.

It remains to check that $\Ker$ commutes with direct limits.  This follows from the fact that both $F$ and $H^{0}(\mathbb{P}^{1},-\otimes W(F))$ commute with direct limits.  Therefore, \cite[Theorem 7.12]{N} implies the second result.
\end{proof}

For the remainder of this section, we will prove that $m$ from Lemma \ref{lemma.alphabetaepi}(1) must equal $0$. Our strategy will be to use the fact that if $m>0$, we can embed a functor which is not totally global into $\Ker$, thus obtaining a contradiction.  Our strategy is made possible by the fact that natural transformations $\Omega:F \rightarrow G$ between direct limit preserving functors in ${\sf Funct}_{k}({\sf Qcoh }\mathbb{P}^{1}, {\sf Qcoh }(\operatorname{Spec }k))$ such that $G$ is totally global may be constructed inductively.  The precise result (used implicitly in the proof of \cite[Proposition 7.6]{N}) is given by the following

\begin{lemma} \label{lemma.inductive}
Let $F$ and $G$ be direct limit preserving $k$-linear functors from ${\sf Qcoh }\mathbb{P}^{1}$ to ${\sf Qcoh }Y$ such that $G$ is totally global.  Suppose for all $n \in \mathbb{Z}$, morphisms $\underline{\Omega}_{\mathcal{O}(n)}:F(\mathcal{O}(n)) \rightarrow G(\mathcal{O}(n))$ are defined such that the diagram
$$
\begin{CD}
F(\mathcal{O}(i)) & \overset{F(\psi)}{\rightarrow} & F(\mathcal{O}(i+1)) \\
@V{\underline{\Omega}_{\mathcal{O}(i)}}VV @VV{\underline{\Omega}_{\mathcal{O}(i+1)}}V \\
G(\mathcal{O}(i)) & \underset{G(\psi)}{\rightarrow} & G(\mathcal{O}(i+1))
\end{CD}
$$
commutes for all $i \in \mathbb{Z}$ and all $\psi \in {\sf Hom}_{\mathcal{O}_{\mathbb{P}^{1}}}(\mathcal{O}(i),\mathcal{O}(i+1))$.  Then there is a unique natural transformation $\Omega:F \rightarrow G$ such that $\Omega_{\mathcal{O}(n)}=\underline{\Omega}_{\mathcal{O}(n)}$ for all $n$.  Furthermore,
\begin{enumerate}
\item{} if $\underline{\Omega}_{\mathcal{O}(n)}$ is monic for all $n$ and $F$ vanishes on coherent torsion modules, then $\Omega$ is monic, and

\item{} if $\underline{\Omega}_{\mathcal{O}(n)}$ is epic for all $n$ then $\Omega$ is epic.
\end{enumerate}
\end{lemma}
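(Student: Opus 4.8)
The plan is to build the natural transformation $\Omega$ by first extending the prescribed maps $\underline{\Omega}_{\mathcal{O}(n)}$ to all coherent sheaves, then to all quasi-coherent sheaves, using the standard generation properties of ${\sf Qcoh}\,\mathbb{P}^{1}$ together with the hypothesis that $G$ is totally global. First I would recall that every coherent sheaf on $\mathbb{P}^{1}$ admits a finite presentation $\oplus_{j}\mathcal{O}(a_{j}) \to \oplus_{k}\mathcal{O}(b_{k}) \to \mathcal{M} \to 0$, so that $F(\mathcal{M})$ is the cokernel of $F$ applied to the presenting map and likewise for $G(\mathcal{M})$. The compatibility square in the hypothesis is stated only for twists differing by $1$ and only for maps $\mathcal{O}(i)\to\mathcal{O}(i+1)$; the first genuine task is to promote it to a commuting square for \emph{every} morphism $\mathcal{O}(a)\to\mathcal{O}(b)$. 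When $a\le b$ this follows because $\operatorname{Hom}(\mathcal{O}(a),\mathcal{O}(b))$ is spanned by composites of the degree-one maps $\mathcal{O}(i)\to\mathcal{O}(i+1)$, so the square for a composite follows from the squares for the factors by pasting. When $a>b$ there are no nonzero morphisms, so there is nothing to check; and the diagonal sums $\oplus\mathcal{O}(a_j)\to\oplus\mathcal{O}(b_k)$ are handled componentwise by $k$-linearity. Hence $\underline{\Omega}_{\mathcal{M}} := \operatorname{cok}(\text{presenting square})$ is a well-defined map $F(\mathcal{M})\to G(\mathcal{M})$, and a second diagram chase (comparing two presentations via a common refinement) shows it is independent of the chosen presentation and natural in $\mathcal{M}$ for morphisms of coherent sheaves.

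Next I would pass to arbitrary quasi-coherent $\mathcal{N}$: write $\mathcal{N}$ as the direct limit of its coherent subsheaves (or, equivalently, filter by finitely generated pieces), and define $\Omega_{\mathcal{N}} := \varinjlim \underline{\Omega}_{\mathcal{M}}$, using that both $F$ and $G$ commute with direct limits so that $F(\mathcal{N})=\varinjlim F(\mathcal{M})$ and $G(\mathcal{N})=\varinjlim G(\mathcal{M})$. Functoriality and independence of the chosen directed system follow from the coherent case together with the fact that any two cofinal systems of coherent subsheaves are dominated by a third. Uniqueness of $\Omega$ is then immediate: any natural transformation agreeing with $\underline{\Omega}$ on the $\mathcal{O}(n)$ must agree on all coherent sheaves by right exactness (a natural transformation is determined on cokernels of maps between the $\mathcal{O}(n)$'s), and then on all quasi-coherent sheaves by continuity.

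For the two supplementary statements I would argue as follows. For (2), if every $\underline{\Omega}_{\mathcal{O}(n)}$ is epic, then for each coherent $\mathcal{M}$ the middle and outer vertical maps in the presentation diagram are epic, so by the four-lemma (or a direct chase on the cokernel rows) $\underline{\Omega}_{\mathcal{M}}$ is epic; epimorphisms are preserved by direct limits, so $\Omega_{\mathcal{N}}$ is epic for all $\mathcal{N}$. For (1) — here is where totally-global-ness of $G$ and the vanishing hypothesis on $F$ enter — suppose each $\underline{\Omega}_{\mathcal{O}(n)}$ is monic and $F$ kills coherent torsion sheaves. Given a short exact sequence $0\to\mathcal{K}\to\oplus\mathcal{O}(a_j)\to\mathcal{M}\to 0$ with $\mathcal{M}$ a vector bundle (so $\mathcal{K}$ is also a vector bundle, being a subsheaf of a sum of line bundles on $\mathbb{P}^{1}$), right exactness of $F$ and left exactness of $G$ on such sequences, plus injectivity of $\Omega$ on the sums $\oplus\mathcal{O}(a_j)$, give injectivity of $\Omega_{\mathcal{M}}$ for vector bundles; then for a general coherent $\mathcal{M}$ one uses the decomposition of $\mathcal{M}$ into a vector bundle part and a torsion part (valid on $\mathbb{P}^{1}$), noting $F$ and $G$ both vanish on the torsion part — $F$ by hypothesis, $G$ because a coherent torsion sheaf is $u_{*}$ of an affine sheaf for a suitable affine open $u$, and $G$ is totally global — so $\Omega_{\mathcal{M}}$ reduces to the vector bundle case. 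Passing to the limit preserves monomorphisms (filtered colimits are exact), giving (1) for all quasi-coherent sheaves.

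I expect the main obstacle to be the well-definedness and naturality of the extension $\underline{\Omega}_{\mathcal{M}}$ on coherent sheaves: one must check carefully that the cokernel construction does not depend on the presentation and that it is compatible with arbitrary morphisms of coherent sheaves, which requires manipulating (non-canonical) lifts of morphisms to the presenting complexes and using $k$-linearity to reduce to the generators $\mathcal{O}(i)\to\mathcal{O}(i+1)$. Once this bookkeeping is in place, the passage to quasi-coherent sheaves and the monic/epic statements are routine diagram chases.
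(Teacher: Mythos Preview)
Your approach has a genuine gap: you repeatedly invoke right exactness of $F$ (and in places left or right exactness of $G$), but the lemma does not assume either functor is right exact. In particular, when you write that from a presentation $\bigoplus_j \mathcal{O}(a_j) \to \bigoplus_k \mathcal{O}(b_k) \to \mathcal{M} \to 0$ one obtains $F(\mathcal{M})$ as the cokernel of $F$ applied to the presenting map, this is simply false without right exactness, so the cokernel construction does not produce a map out of $F(\mathcal{M})$ at all. The same defect undermines your uniqueness argument (``determined on cokernels'') and your argument for (1), where you appeal to ``right exactness of $F$ and left exactness of $G$ on such sequences,'' neither of which is given. In the paper the lemma is applied, for instance, with $G = \operatorname{ker}\Gamma_F$, which is not right exact, so these hypotheses really cannot be smuggled in.

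The paper sidesteps this by exploiting the classification of coherent sheaves on $\mathbb{P}^1$: every coherent $\mathcal{F}$ splits as a finite direct sum of line bundles $\mathcal{O}(n)$ and indecomposable torsion sheaves. One sets $\underline{\Omega}_{\mathcal{T}} = 0$ on torsion summands (the target $G(\mathcal{T})$ vanishes since $G$ is totally global), transports $\underline{\Omega}_{\mathcal{O}(n)}$ along isomorphisms to any invertible summand, and takes the direct sum. Naturality is then checked on morphisms between indecomposables; the only nontrivial case is $\mathcal{O}(i) \to \mathcal{O}(j)$ with $i<j$, where one factors through degree-one maps (using that $k$ is algebraically closed) exactly as you suggested. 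Monic/epic statements and the passage to quasi-coherent sheaves via direct limits then go through as you outlined, and uniqueness is reduced (via the cited \cite[Lemma~7.5]{N}) to the coherent case, where it follows from the decomposition rather than from any exactness of $F$ or $G$. The key point you missed is that on $\mathbb{P}^1$ one can work with the splitting rather than with resolutions, and this is what makes the argument go without exactness hypotheses.
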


\begin{proof}
By \cite[Lemma 7.5]{N}, to construct $\Omega$, it suffices to construct a natural transformation $\underline{\Omega}:F|_{{\sf coh} \mathbb{P}^{1}} \rightarrow G|_{{\sf coh} \mathbb{P}^{1}}$ where ${\sf coh }\mathbb{P}^{1}$ denotes the full subcategory of ${\sf Qcoh }\mathbb{P}^{1}$ consisting of coherent $\mathcal{O}_{\mathbb{P}^{1}}$-modules.  Uniqueness follows from \cite[Lemma 7.5]{N}.  We will see from the construction of $\underline{\Omega}$ below that if $\underline{\Omega}_{\mathcal{O}(n)}$ is monic for all $n$ and $F$ vanishes on coherent torsion modules, then $\underline{\Omega}$ is monic so that (1) follows from \cite[Lemma 7.5]{N}.  Similarly, we will see that if $\underline{\Omega}_{\mathcal{O}(n)}$ is epic for all $n$ then $\underline{\Omega}$ is epic so that (2) follows from \cite[Lemma 7.5]{N}.

If $\mathcal{T}$ is coherent torsion, we define $\underline{\Omega}_{\mathcal{T}}:F(\mathcal{T}) \rightarrow G(\mathcal{T})$ to be the zero map.

Next, we define $\underline{\Omega}_{\mathcal{F}}$ when
$\mathcal{F}$ is isomorphic to $\mathcal{O}(n)$.  Let $\alpha:
\mathcal{F} \rightarrow \mathcal{O}(n)$ be an isomorphism.  Define
$$
\underline{\Omega}_{\mathcal{F}}:=(G (\alpha))^{-1}
\underline{\Omega}_{\mathcal{O}(n)}  F (\alpha).
$$
If $\beta: \mathcal{F} \rightarrow \mathcal{O}(n)$ is another
isomorphism, then $\beta = b \alpha$ for some $0 \neq
b \in k$, whence $(G (\beta))^{-1}=b^{-1}(G
(\alpha))^{-1}$ and $F (\beta) = b F (\alpha)$; thus the
definition of $\Omega_{\mathcal{F}}$ does not depend on the choice
of $\alpha$.

Next, we define $\underline{\Omega}_{\mathcal{F}}$ for arbitrary $\mathcal{F}$ by writing $\mathcal{F}$ as a direct sum of indecomposables, say $\mathcal{F}=\oplus \mathcal{F}_{i}$, and defining $\underline{\Omega}_{\mathcal{F}}:=\oplus \underline{\Omega}_{\mathcal{F}_{i}}$.

To show that $\underline{\Omega}$ is a natural transformation we
must show that
\begin{equation} \label{eqn.welldefined}
\begin{CD}
F (\mathcal{F}) & \overset{F(\psi)}{\longrightarrow} & F (\mathcal{G}) \\
@V{\underline{\Omega}_{\mathcal{F}}}VV @VV{\underline{\Omega}_{\mathcal{G}}}V \\
G (\mathcal{F}) & \underset{G(\psi)}{\longrightarrow} & G (\mathcal{G})
\end{CD}
\end{equation}
commutes for all $\mathcal{F}$ and $\mathcal{G}$ and all maps
$\psi:\mathcal{F} \rightarrow \mathcal{G}$.  It suffices to check
this when $\mathcal{F}$ and $\mathcal{G}$ are indecomposable.  The
diagram commutes when $\mathcal{G}$ is torsion because $G(
\mathcal{G})=0$.  If $\mathcal{G}$ is torsion-free and
$\mathcal{F}$ torsion, then $\psi=0$ so the diagram commutes.  Thus,
the only remaining case is that when $\mathcal{F} \cong
\mathcal{O}(i)$ and $\mathcal{G} \cong \mathcal{O}(j)$ with $i
\leq j$.  The case $i=j$ is straightforward and we omit the verification in this case.
Thus, we may suppose $i<j$.

Write $\psi = \beta^{-1} \phi \alpha$ where $\alpha: \mathcal{F}
\rightarrow \mathcal{O}(i)$ and $\beta: \mathcal{G} \rightarrow
\mathcal{O}(j)$ are isomorphisms and $0 \neq \phi: \mathcal{O}(i)
\rightarrow \mathcal{O}(j)$.  Since $k$ is algebraically closed, we can write $\phi$ as a composition $\phi_{j}\phi_{j-1}
\cdots \phi_{i+1}$ where each $\phi_{l}:\mathcal{O}(l-1)
\rightarrow \mathcal{O}(l)$ is monic.  Now
$$
\underline{\Omega}_{\mathcal{O}(j)}  F(\phi_{j})
\cdots  F(\phi_{i+1}) = G (\phi_{j})  \cdots  G (
\phi_{i+1})  \underline{\Omega}_{\mathcal{O}(i)}
$$
and the fact that $\underline{\Omega}_{\mathcal{G}}  F(f)=G(f)  \underline{\Omega}_{\mathcal{F}}$ now follows from a straightforward computation, which we omit.
\end{proof}

We define a functor $R_{q}: {\sf Qcoh }\mathbb{P}^{1} \rightarrow {\sf Qcoh }(\operatorname{Spec }k)$ by
$$
R_{q}(-) := H^{0}(\mathbb{P}^{1},((-)/\mathcal{H}_{q}^{0}(-)) \otimes k(q)),
$$
where, $\mathcal{H}_{q}^{0}:{\sf Qcoh }\mathbb{P}^{1} \rightarrow {\sf Qcoh }\mathbb{P}^{1}$ is the functor which sends a quasi-coherent sheaf to its subsheaf with support at $q$.  In the case that the integer $m$ from Lemma \ref{lemma.alphabetaepi}(1) is greater than $0$, we will embed $R_{q}$ in $\Ker$ in the proof of Proposition \ref{prop.m}.

\begin{lemma} \label{lemma.k1}
The functor $R_{q}$
\begin{enumerate}
\item is $k$-linear,

\item commutes with direct limits, and

\item $R_{q}(\mathcal{T})=0$ for all coherent torsion modules $\mathcal{T}$.  Furthermore,

\item $R_{q}(\alpha_{n})$ is an isomorphism for all $n$, and

\item $R_{q}(\beta_{n})=0$ for all $n$.
\end{enumerate}
\end{lemma}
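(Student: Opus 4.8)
The plan is to rewrite $R_q$ as the composite $R_q = H^{0}(\mathbb{P}^{1},(-)\otimes k(q)) \circ Q_q$, where $Q_q := (-)/\mathcal{H}_q^{0}(-)$, and to check the five assertions in turn, using throughout that $\mathcal{H}_q^{0}(\mathcal{F})$ is the kernel of the unit map $\mathcal{F} \to j_{*}j^{*}\mathcal{F}$, where $j:\mathbb{P}^{1}\setminus\{q\}\hookrightarrow\mathbb{P}^{1}$ is the inclusion of the complement of $q$, and that at the level of stalks $\mathcal{H}_q^{0}(\mathcal{F})_q$ is the $\mathfrak{m}_q$-power-torsion submodule of $\mathcal{F}_q$. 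Assertion (1) is then immediate: $\mathcal{H}_q^{0}$ is a $k$-linear subfunctor of the identity, hence $Q_q$ is $k$-linear, and since $(-)\otimes k(q)$ and $H^{0}(\mathbb{P}^{1},-)$ are $k$-linear, so is the composite $R_q$. The only step below requiring genuine care is the direct limit claim (2) for $\mathcal{H}_q^{0}$; everything else is a stalkwise computation.

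For (2): since $\mathbb{P}^{1}\setminus\{q\}\cong\mathbb{A}^{1}$, the morphism $j$ is affine, so $j_{*}$ commutes with direct limits of quasi-coherent sheaves, and $j^{*}$ commutes with all colimits; hence $\mathcal{H}_q^{0}$, being the kernel of a natural transformation between functors commuting with direct limits, commutes with direct limits. (Alternatively, $\mathcal{H}_q^{0}\cong\varinjlim_{n}\mathcal{H}om_{\mathcal{O}_{\mathbb{P}^{1}}}(\mathcal{O}_{\mathbb{P}^{1}}/\mathcal{I}_q^{n},-)$, with $\mathcal{I}_q$ the ideal sheaf of $q$, and each $\mathcal{H}om_{\mathcal{O}_{\mathbb{P}^{1}}}(\mathcal{O}_{\mathbb{P}^{1}}/\mathcal{I}_q^{n},-)$ commutes with direct limits since $\mathbb{P}^{1}$ is Noetherian.) It follows that $Q_q$, the cokernel of $\mathcal{H}_q^{0}\hookrightarrow\mathrm{id}$, commutes with direct limits; and since $k(q)$ is coherent torsion, $H^{0}(\mathbb{P}^{1},-\otimes k(q))$ is right exact and commutes with direct limits, exactly as in the proof of Corollary \ref{cor.cokvanish}. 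Hence $R_q$ commutes with direct limits.

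For (3): if $\mathcal{T}$ is coherent torsion, then $\mathcal{T}_q$ is a finite-length $\mathcal{O}_{\mathbb{P}^{1},q}$-module, hence is annihilated by a power of $\mathfrak{m}_q$, so $\mathcal{H}_q^{0}(\mathcal{T})_q=\mathcal{T}_q$ and $Q_q(\mathcal{T})$ has vanishing stalk at $q$. As $k(q)$ is a skyscraper at $q$, the sheaf $Q_q(\mathcal{T})\otimes k(q)$ is supported at $q$ with stalk $Q_q(\mathcal{T})_q\otimes_{\mathcal{O}_{\mathbb{P}^{1},q}}k=0$, hence is zero; therefore $R_q(\mathcal{T})=H^{0}(\mathbb{P}^{1},0)=0$.

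For (4) and (5): since $\mathcal{O}(n-1)$ and $\mathcal{O}(n)$ are torsion-free, $\mathcal{H}_q^{0}$ vanishes on them and $Q_q$ carries $\mathcal{O}(n-1)$, $\mathcal{O}(n)$ and every morphism between them to themselves; hence for $\delta_n$ multiplication by a linear form one has $R_q(\delta_n)=H^{0}(\mathbb{P}^{1},\delta_n\otimes k(q))$, and in particular $R_q(\mathcal{O}(n))\cong k$ for every $n$ (the stalk of $\mathcal{O}(n)\otimes k(q)$ at $q$ being $\mathcal{O}(n)_q\otimes_{\mathcal{O}_{\mathbb{P}^{1},q}}k\cong k$). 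The stalk of $\delta_n\otimes k(q)$ at $q$ is $\delta_{n,q}\otimes_{\mathcal{O}_{\mathbb{P}^{1},q}}k$, where $\delta_{n,q}:\mathcal{O}(n-1)_q\to\mathcal{O}(n)_q$ is the induced map of free rank-one $\mathcal{O}_{\mathbb{P}^{1},q}$-modules. For $\delta_n=\alpha_n$, the cokernel of $\delta_n$ is $k(p)$ with $p\neq q$ (as $p$ lies outside and $q$ inside the support of $W(F)$), so $\delta_n$ is an isomorphism in a neighborhood of $q$; hence $\alpha_{n,q}$ is an isomorphism, $\alpha_n\otimes k(q)$ is an isomorphism of one-dimensional skyscrapers at $q$, and $R_q(\alpha_n)=H^{0}(\mathbb{P}^{1},\alpha_n\otimes k(q))$ is an isomorphism, giving (4). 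For $\delta_n=\beta_n$, the cokernel of $\delta_n$ is $k(q)$, so $\beta_{n,q}$ is multiplication by a uniformizer of $\mathcal{O}_{\mathbb{P}^{1},q}$; hence $\beta_{n,q}\otimes_{\mathcal{O}_{\mathbb{P}^{1},q}}k=0$, so $\beta_n\otimes k(q)=0$ and $R_q(\beta_n)=0$, giving (5).
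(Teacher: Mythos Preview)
Your proof is correct and follows essentially the same approach as the paper's: both decompose $R_q$ as $H^{0}(\mathbb{P}^{1},(-)\otimes k(q))$ applied to the quotient by $\mathcal{H}_q^{0}$, reduce (2) to the direct-limit preservation of $\mathcal{H}_q^{0}$, deduce (3) from the vanishing of $Q_q(\mathcal{T})$ at $q$, and handle (4) and (5) by observing that $\mathcal{H}_q^{0}$ vanishes on line bundles so that $R_q(\delta_n)=H^{0}(\mathbb{P}^{1},\delta_n\otimes k(q))$. The only notable difference is that you supply an explicit argument for (2) via the identification of $\mathcal{H}_q^{0}$ as the kernel of the unit $\mathrm{id}\to j_*j^*$ (the paper leaves this step as an exercise), and you compute $\alpha_n\otimes k(q)$ and $\beta_n\otimes k(q)$ stalkwise rather than by tensoring the defining short exact sequences; these are equivalent verifications.
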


\begin{proof}
Since $\mathcal{H}_{q}^{0}$ is $k$-linear and since ${\sf Funct}_{k}({\sf Qcoh }X,{\sf Qcoh }Y)$ is abelian, $R_{q}$ is a composition of $k$-linear functors, and (1) follows.  To prove (2), we note that it suffices to prove $\mathcal{H}_{q}^{0}$ commutes with direct limits.  This is a straightforward exercise, which we omit.  Part (3) follows from the fact that the support of $\mathcal{T}/\mathcal{H}_{q}^{0}(\mathcal{T})$ does not include $q$.

We now prove (4).  Since $\mathcal{H}_{q}^{0}$ vanishes on vector bundles, $R_{q}(\alpha_{n})=H^{0}(\mathbb{P}^{1},\alpha_{n}\otimes k(q))$.  Therefore, it suffices to prove that $\alpha_{n}\otimes k(q)$ is an isomorphism.  To this end, since $p$ is disjoint from the support of $k(q)$, it follows that $-\otimes k(q)$ is exact when applied to
$0 \rightarrow \mathcal{O}(n-1) \overset{\alpha_{n}}{\rightarrow} \mathcal{O}(n) \rightarrow k(p) \rightarrow 0$
and the result follows.

To prove (5), we first note that, as in the proof of (4), $R_{q}(\beta_{n})=H^{0}(\mathbb{P}^{1},\beta_{n}\otimes k(q))$.  Therefore, it suffices to prove that $\beta_{n}\otimes k(q)=0$.  To this end, we apply $-\otimes k(q)$ to the short exact sequence $0 \rightarrow \mathcal{O}(n-1) \overset{\beta_{n}}{\rightarrow} \mathcal{O}(n) \rightarrow k(q) \rightarrow 0$ to obtain an exact sequence $\mathcal{O}(n-1)\otimes k(q) \overset{\beta_{n}\otimes k(q)}{\rightarrow} \mathcal{O}(n)\otimes k(q) \overset{\cong}{\rightarrow} k(q) \rightarrow 0$.  It follows that $\beta_{n}\otimes k(q)=0$ as desired.
\end{proof}

Recall that by Lemma \ref{lemma.alphabetaepi}(1), there exist integers $n_{0}$ and $m$ such that the dimension of $\Ker(\mathcal{O}(n))$ equals $m$ for all $n \geq n_{0}$.  This notation is employed in the following result, which is used in the proof of Proposition \ref{prop.m}.

\begin{lemma} \label{lemma.cokker}
Suppose $m \neq 0$.  Then there exists a closed point $r$ in the support of $W(F)$ corresponding to $\gamma_{0}:\mathcal{O}(-1) \rightarrow \mathcal{O}$ such that  $\operatorname{cok }(\Ker(\gamma_{n})) \neq 0$ for all $n>n_{0}$.
\end{lemma}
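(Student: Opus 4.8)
The plan is to establish the statement in two stages. First I would show that, for every closed point $r \in \mathbb{P}^{1}$ with corresponding morphism $\gamma_{0}:\mathcal{O}(-1)\to\mathcal{O}$, the quantity $\dim_{k}\operatorname{cok}(\Ker(\gamma_{n}))$ is independent of $n$ once $n>n_{0}$. Then I would show that, when $m\neq 0$, this common value is nonzero for at least one $r$ in the support of $W(F)$. Combining the two stages gives the lemma.

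For the first stage the key point is that multiplication by linear forms is commutative. By Lemma \ref{lemma.alphabetaepi}(1) the map $\Ker(\alpha_{j})$ is epic, and since $\dim_{k}\Ker(\mathcal{O}(j))=m$ for all $j\geq n_{0}$, it is an isomorphism for every $j\geq n_{0}+1$. The morphisms $\gamma_{j}\alpha_{j-1}$ and $\alpha_{j}\gamma_{j-1}$ from $\mathcal{O}(j-2)$ to $\mathcal{O}(j)$ agree (both are multiplication by the same quadratic form), so functoriality of $\Ker$ yields $\Ker(\gamma_{j})\Ker(\alpha_{j-1})=\Ker(\alpha_{j})\Ker(\gamma_{j-1})$. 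For $j\geq n_{0}+2$ both $\Ker(\alpha_{j-1})$ and $\Ker(\alpha_{j})$ are isomorphisms, so $\Ker(\gamma_{j})=\Ker(\alpha_{j})\,\Ker(\gamma_{j-1})\,\Ker(\alpha_{j-1})^{-1}$. Since precomposing and postcomposing with isomorphisms does not alter the cokernel up to isomorphism, $\operatorname{cok}(\Ker(\gamma_{j}))\cong\operatorname{cok}(\Ker(\gamma_{j-1}))$ for all $j\geq n_{0}+2$, and iterating shows that $\dim_{k}\operatorname{cok}(\Ker(\gamma_{n}))$ is constant for $n>n_{0}$.

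For the second stage it therefore suffices to produce one $r$ in the support of $W(F)$ with $\operatorname{cok}(\Ker(\gamma_{n}))\neq 0$ for a single $n>n_{0}$, and I would do this by contradiction. Suppose $\operatorname{cok}(\Ker(\gamma_{n}))=0$ for every $r$ in the support of $W(F)$ and every $n>n_{0}$; then, for such $r$, $\Ker(\gamma_{n})$ is an epimorphism between spaces of equal finite dimension, hence an isomorphism, for all $n>n_{0}$. For $r$ outside the support of $W(F)$ the argument in the proof of Lemma \ref{lemma.alphabetaepi}(1) applies verbatim — the bottom row of (\ref{eqn.diag1}) is exact because $k(r)\otimes W(F)=0$ and $\gamma_{n}\otimes W(F)$ is an isomorphism, and $\Ker(k(r))=0$ because $k(r)$ is a pushforward under an affine open immersion and $\Ker$ is totally global — so $\Ker(\gamma_{n})$ is epic and again an isomorphism for $n>n_{0}$. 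Thus $\Ker(\delta_{n})$ is an isomorphism for every nonzero $\delta_{0}:\mathcal{O}(-1)\to\mathcal{O}$ and every $n>n_{0}$. Now pick such a $\delta$ whose cokernel is $k(r_{0})$ for some $r_{0}$ outside the support of $W(F)$. The direct system $\mathcal{O}\overset{\delta_{1}}{\longrightarrow}\mathcal{O}(1)\overset{\delta_{2}}{\longrightarrow}\mathcal{O}(2)\longrightarrow\cdots$ has direct limit $u_{*}\mathcal{O}_{U}$, where $u:U=\mathbb{P}^{1}\setminus\{r_{0}\}\hookrightarrow\mathbb{P}^{1}$ is an affine open immersion. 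Since $\Ker$ commutes with direct limits (as shown in the proof of Corollary \ref{cor.kernelstructure}) and is totally global, $\varinjlim_{n}\Ker(\mathcal{O}(n))=\Ker(u_{*}\mathcal{O}_{U})=0$; but all transition maps $\Ker(\delta_{n})$ with $n>n_{0}$ are isomorphisms, so $\varinjlim_{n}\Ker(\mathcal{O}(n))\cong\Ker(\mathcal{O}(n_{0}))$, which has dimension $m\neq 0$. This contradiction finishes the proof.

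I expect the second stage to be the main obstacle: one has to notice that the simultaneous vanishing of all these cokernels forces every twisting map $\Ker(\delta_{n})$ to be invertible for large $n$, and then extract a contradiction from this, the cleanest route being the identification of $\varinjlim_{n}\mathcal{O}(n)$ with the pushforward of $\mathcal{O}_{U}$ from an affine open complement of a point together with total globality of $\Ker$. The first stage is essentially bookkeeping once the conjugation identity coming from commutativity of linear forms is observed.
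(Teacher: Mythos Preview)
Your argument is correct, but you have actually proved more than you intended, and by a route quite different from the paper's.

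The paper argues by contradiction as you do, but finishes with pure linear algebra: assuming the lemma fails, Lemma~\ref{lemma.alphabetaepi}(2) forces $\Ker(\beta_{n_0+1})$ to be an isomorphism for every $q$ in the support of $W(F)$, and together with Lemma~\ref{lemma.alphabetaepi}(1) this makes $\Ker(\delta)$ invertible for every nonzero $\delta\in\operatorname{Hom}(\mathcal{O}(n_0),\mathcal{O}(n_0+1))$. But the homogeneous polynomial $\det\bigl(x_0\Ker(\alpha_{n_0+1})+x_1\Ker(\beta_{n_0+1})\bigr)$ has degree $m>0$ and $k$ is algebraically closed, so it has a nontrivial zero, producing a nonzero $\delta$ with $\Ker(\delta)$ singular --- contradiction. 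Your Stage~1 conjugation argument plays the role the paper assigns to Lemma~\ref{lemma.alphabetaepi}(2), and is a perfectly good substitute.

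Your direct-limit argument in Stage~2 is cleaner, but notice that it never uses the hypothesis you assumed for contradiction. You pick $\delta$ with cokernel $k(r_0)$ for $r_0$ \emph{outside} the support of $W(F)$, and the isomorphism property of $\Ker(\delta_n)$ for $n>n_0$ then comes straight from Lemma~\ref{lemma.alphabetaepi}(1), not from the assumed vanishing of cokernels for points in the support. So your Stage~2, read honestly, shows unconditionally that $m\neq 0$ is impossible: you have given a direct proof of the first assertion of Proposition~\ref{prop.m}, bypassing Lemma~\ref{lemma.cokker}, Lemma~\ref{lemma.inductive}, the functor $R_q$, and the paper's embedding construction entirely. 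Your Stage~1 then becomes unnecessary as well. This is a genuine simplification of the paper --- but as a proof of the lemma as stated, it establishes it only vacuously, and you should recognize and present it as the shortcut it is rather than dress it up as a proof by contradiction that happens not to use its own hypothesis.
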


\begin{proof}
If not, then for an arbirary point $q$ in the support of $W(F)$, there exists an integer $n_{q}>n_{0}$ such that $\operatorname{cok }(\Ker(\beta_{n_{q}}))=0$ and so $\Ker(\beta_{n_{q}})$ is an isomorphism.  It follows from Lemma \ref{lemma.alphabetaepi}(2) that for any $q$ in the support of $W(F)$, $\Ker(\beta_{n_{0}+1})$ is an isomorphism.  On the other hand, by Lemma \ref{lemma.alphabetaepi}(1), $\Ker(\alpha_{n_{0}+1})$ is an isomorphism.  Thus, $\Ker(\delta)$ is an isomorphism for all nonzero $\delta \in \operatorname{Hom}_{\mathcal{O}_{\mathbb{P}^{1}}}(\mathcal{O}(n_{0}),\mathcal{O}(n_{0}+1))$.  However, if $x_{0}$, $x_{1}$ are indeterminates,
$$
\operatorname{det }(x_{0}\Ker(\alpha_{n_{0}+1})+x_{1}\Ker(\beta_{n_{0}+1}))
$$
is a homogeneous polynomial of degree $m>0$ in $x_{0},x_{1}$.  Therefore, it has a nontrivial zero.  Since $\Ker$ is $k$-linear, this provides a nonzero $\delta  \in \operatorname{Hom}_{\mathcal{O}_{\mathbb{P}^{1}}}(\mathcal{O}(n_{0}),\mathcal{O}(n_{0}+1))$ such that $\Ker(\delta)$ is not an isomorphism, and the contradiction establishes the result.
\end{proof}

\begin{prop} \label{prop.m}
The constant $m$ in Lemma \ref{lemma.alphabetaepi}(1) is $0$.  Therefore, there exist nonnegative integers $l_{i}$ and $l$ such that there is an exact sequence
\begin{equation} \label{eqn.finalses}
0 \rightarrow \oplus_{i=-l}^\infty {H}^{1}(\mathbb{P}^{1},(-)(i))^{\oplus l_{i}} \rightarrow F \overset{\Gamma_{F}}{\rightarrow} H^{0}(\mathbb{P}^{1},-\otimes W(F)) \rightarrow 0
\end{equation}
\end{prop}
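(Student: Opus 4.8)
The plan is to prove $m=0$ by contradiction. If $m>0$ I will construct a \emph{monic} natural transformation $\Omega:R_{q}\rightarrow \Ker$, which is impossible: $\Ker$ is totally global by Theorem \ref{thm.watts}, so $\Ker u_{*}=0$ for every affine open immersion $u:U\rightarrow\mathbb{P}^{1}$, and a monic $\Omega$ would embed $R_{q}u_{*}$ into $\Ker u_{*}=0$, forcing $R_{q}u_{*}=0$; but if $u$ is the inclusion of the complement of a closed point $p'\neq q$, then $u$ is affine, $u_{*}\mathcal{O}_{U}$ is torsion-free and isomorphic to $\mathcal{O}_{\mathbb{P}^{1}}$ on the neighborhood $U$ of $q$, so $\mathcal{H}_{q}^{0}(u_{*}\mathcal{O}_{U})=0$ and $R_{q}(u_{*}\mathcal{O}_{U})=H^{0}(\mathbb{P}^{1},u_{*}\mathcal{O}_{U}\otimes k(q))\cong H^{0}(\mathbb{P}^{1},k(q))\neq 0$. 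Thus $R_{q}$ is not totally global, and the monic $\Omega$ gives the contradiction.

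To construct $\Omega$ I will apply Lemma \ref{lemma.inductive} with $F=R_{q}$ and $G=\Ker$; this is legitimate since $R_{q}$ preserves direct limits and vanishes on coherent torsion modules (Lemma \ref{lemma.k1}), $\Ker$ preserves direct limits (as in the proof of Corollary \ref{cor.kernelstructure}), and $\Ker$ is totally global. Fix compatible generators $e_{n}$ of the one-dimensional spaces $R_{q}(\mathcal{O}(n))$, i.e. with $R_{q}(\alpha_{n+1})(e_{n})=e_{n+1}$ for all $n$, which is possible because $R_{q}(\alpha_{n+1})$ is an isomorphism by Lemma \ref{lemma.k1}(4). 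Since $p\neq q$, the maps $\alpha_{n+1}$ and $\beta_{n+1}$ span $\operatorname{Hom}_{\mathcal{O}_{\mathbb{P}^{1}}}(\mathcal{O}(n),\mathcal{O}(n+1))$, so by $k$-linearity it suffices to produce, for each $n\in\mathbb{Z}$, a nonzero vector $v_{n}:=\underline{\Omega}_{\mathcal{O}(n)}(e_{n})\in\Ker(\mathcal{O}(n))$ (making $\underline{\Omega}_{\mathcal{O}(n)}$ monic) such that $\Ker(\alpha_{n+1})(v_{n})=v_{n+1}$ and $\Ker(\beta_{n+1})(v_{n})=0$, the latter condition being forced by $R_{q}(\beta_{n+1})=0$ (Lemma \ref{lemma.k1}(5)).

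Producing the family $(v_{n})_{n\in\mathbb{Z}}$ is the heart of the argument, and I expect it to be the main obstacle. Since $m\neq 0$, Lemma \ref{lemma.cokker} lets me choose $q$ (replacing the point fixed at the start of the section by the point $r$ it produces, which is permissible since $q$ was an arbitrary point of the support of $W(F)$) so that $\operatorname{cok}(\Ker(\beta_{n}))\neq 0$ for all $n>n_{0}$. In particular $\Ker(\beta_{n_{0}+1})$ is not surjective, and since $\Ker(\mathcal{O}(n_{0}))$ and $\Ker(\mathcal{O}(n_{0}+1))$ are both $m$-dimensional it is not injective either, so I pick a nonzero $v_{n_{0}}$ in its kernel. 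Going up, for $n\geq n_{0}$ I set $v_{n+1}:=\Ker(\alpha_{n+1})(v_{n})$; this is nonzero because $\Ker(\alpha_{n+1})$ is an epimorphism between $m$-dimensional spaces, hence an isomorphism (Lemma \ref{lemma.alphabetaepi}(1)), and $\Ker(\beta_{n+2})(v_{n+1})=0$ follows from $\Ker(\beta_{n+1})(v_{n})=0$ via the identity $\Ker(\beta_{n+2})\Ker(\alpha_{n+1})=\Ker(\alpha_{n+2})\Ker(\beta_{n+1})$ read off the complex (\ref{eqn.kerapplied}) with $j=n+2$. Going down, given $v_{n}$ with $\Ker(\beta_{n+1})(v_{n})=0$, the element $(v_{n},0)$ of $\Ker(\mathcal{O}(n))^{\oplus 2}$ lies in the kernel of the second map of (\ref{eqn.kerapplied}) with $j=n+1$, hence in the image of the first, which yields $v_{n-1}\in\Ker(\mathcal{O}(n-1))$ with $\Ker(\alpha_{n})(v_{n-1})=v_{n}$ and $\Ker(\beta_{n})(v_{n-1})=0$; moreover $v_{n-1}\neq 0$ since $\Ker(\alpha_{n})(v_{n-1})=v_{n}\neq 0$. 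This defines all the $v_{n}$ with the required properties, so Lemma \ref{lemma.inductive}(1) supplies the monic $\Omega$, completing the contradiction and proving $m=0$.

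With $m=0$ established, Corollary \ref{cor.kernelstructure} provides nonnegative integers $l$ and $l_{i}$ with $\Ker\cong\oplus_{i=-l}^{\infty}H^{1}(\mathbb{P}^{1},(-)(i))^{\oplus l_{i}}$, and substituting this into the short exact sequence of Corollary \ref{cor.cokvanish} yields (\ref{eqn.finalses}).
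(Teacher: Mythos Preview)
Your proof is correct and follows essentially the same route as the paper: assume $m>0$, invoke Lemma~\ref{lemma.cokker} to choose the point so that $\Ker(\beta_{n_{0}+1})$ has nontrivial kernel, build the vectors $v_{n}$ inductively upward and downward using the exactness of (\ref{eqn.kerapplied}), and apply Lemma~\ref{lemma.inductive}(1) to obtain a monic $R_{q}\rightarrow\Ker$, contradicting total globality. The only cosmetic difference is that for the upward step you use the commutation relation $\Ker(\beta_{n+2})\Ker(\alpha_{n+1})=\Ker(\alpha_{n+2})\Ker(\beta_{n+1})$ directly (which is just functoriality applied to $\beta_{n+2}\alpha_{n+1}=\alpha_{n+2}\beta_{n+1}$), whereas the paper phrases the same computation as ``$(v_{n},0)$ lies in the image of the first map of the complex, hence in the kernel of the second.''
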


\begin{proof}
The second part of the proposition follows from the first part in light of Corollary \ref{cor.cokvanish} and Corollary \ref{cor.kernelstructure}.

To prove the first part of the proposition, we proceed by contradiction.  Suppose $m \neq 0$.  We let $r$ denote the closed point in the support of $W(F)$ shown to exist in Lemma \ref{lemma.cokker}.  We show that there is a monic natural transformation $\Delta:R_{r} \rightarrow \Ker$.  This contradicts the fact that if $u:U \rightarrow \mathbb{P}^{1}$ is inclusion of an affine open subscheme containing $r$ then $R_{r}(u_{*}u^{*}\mathcal{O}_{\mathbb{P}^{1}}) \neq 0$ while, since $\Ker$ is totally global, $\Ker (u_{*}u^{*}\mathcal{O}_{\mathbb{P}^{1}})=0$.

The functor $R_{r}$ is $k$-linear and commutes with direct limits by Lemma \ref{lemma.k1}(1) and (2), and $\Ker$ is $k$-linear and commutes with direct limits since it is a kernel of a natural transformation between $k$-linear functors which commute with direct limits by Corollary \ref{cor.cokvanish}.  Furthermore, if $\mathcal{T}$ is coherent torsion, then $R_{r}(\mathcal{T})=0$ by Lemma \ref{lemma.k1}(3).  Therefore, to define a monomorphism $\Delta:R_{r} \rightarrow \Ker$, it suffices, by Lemma \ref{lemma.inductive}, to define, for all $n \in \mathbb{Z}$, a monomorphism $\underline{\Delta}_{\mathcal{O}(n)}:R_{r}(\mathcal{O}(n)) \rightarrow \Ker(\mathcal{O}(n))$ such that
\begin{equation} \label{eqn.mainsquare}
\begin{CD}
R_{r}(\mathcal{O}(i)) & \overset{R_{r}(\psi)}{\rightarrow} & R_{r}(\mathcal{O}(i+1)) \\
@V{\underline{\Delta}_{\mathcal{O}(i)}}VV @VV{\underline{\Delta}_{\mathcal{O}(i+1)}}V \\
\Ker(\mathcal{O}(i)) & \underset{\Ker(\psi)}{\rightarrow} & \Ker(\mathcal{O}(i+1))
\end{CD}
\end{equation}
commutes for all $i \in \mathbb{Z}$ and all $\psi \in \operatorname{Hom}_{\mathcal{O}_{\mathbb{P}^{1}}}(\mathcal{O}(i),\mathcal{O}(i+1))$. In our construction of $\underline{\Delta}$, we will retain the notation both preceding the statement of Lemma \ref{lemma.cokker} and defined in the statement of Lemma \ref{lemma.cokker}.

We begin by defining $\underline{\Delta}_{\mathcal{O}(n_{0})}$.  To this end we let $a_{n_{0}}$ be a generator of $R_{r}(\mathcal{O}(n_{0}))$ and we let $0 \neq b_{n_{0}} \in \Ker(\mathcal{O}(n_{0}))$ be in the kernel of $\Ker(\gamma_{n_{0}+1})$.  Such an element exists by Lemma \ref{lemma.cokker}.  We define $\underline{\Delta}_{\mathcal{O}(n_{0})}(a_{n_{0}})=b_{n_{0}}$ and extend linearly.

We define $\underline{\Delta}_{\mathcal{O}(n_{0}+1)}$ as follows:  we let $a_{n_{0}+1} \in R_{r}(\mathcal{O}(n_{0}+1))$ be $R_{r}(\alpha_{n_{0}+1})(a_{n_{0}})$.  Since $R_{r}(\alpha_{n_{0}+1})$ is an isomorphism by Lemma \ref{lemma.k1}(4), $a_{n_{0}+1} \neq 0$ so is a basis for $R_{r}(\mathcal{O}(n_{0}+1))$.  We define $\underline{\Delta}_{\mathcal{O}(n_{0}+1)}(a_{n_{0}+1})=\Ker(\alpha_{n_{0}+1})(b_{n_{0}})=:b_{n_{0}+1}$ and extend linearly.  The fact that $\underline{\Delta}_{\mathcal{O}(n_{0}+1)}$ is monic follows immediately from the fact that, by choice of $n_{0}$, $\Ker(\alpha_{n_{0}+1})$ is an isomorphism by Lemma \ref{lemma.alphabetaepi}(1).

We now check that (\ref{eqn.mainsquare}) commutes in case $i=n_{0}$.  Since, by construction, the diagram commutes when $\psi=\alpha_{n_{0}+1}$, we need only check that it commutes when $\psi=\gamma_{n_{0}+1}$.  By Lemma \ref{lemma.k1}(5), $R_{r}(\gamma_{n_{0}+1})=0$ and $b_{n_{0}}$ is chosen so that $\Ker(\gamma_{n_{0}+1})(b_{n_{0}})=0$, so the diagram commutes for all $\psi \in \operatorname{Hom}_{\mathcal{O}_{\mathbb{P}^{1}}}(\mathcal{O}(n_{0}),\mathcal{O}(n_{0}+1))$ as desired.

Now suppose $n>n_{0}$ and for all $j$ such that $n_{0} < j \leq n$, we have defined a monic $\underline{\Delta}_{\mathcal{O}(j)}$ such that (\ref{eqn.mainsquare}) commutes when $i=j-1$.

We define $\underline{\Delta}_{\mathcal{O}(n+1)}$ as follows.  Let $a_{n+1} \in R_{r}(\mathcal{O}(n+1))$ be $R_{r}(\alpha_{n+1})(a_{n})$ where $a_{n}$ is some nonzero element of $R_{r}(\mathcal{O}(n))$.  Since $R_{r}(\alpha_{n+1})$ is an isomorphism by Lemma \ref{lemma.k1}(4), $a_{n+1} \neq 0$ so is a basis for $R_{r}(\mathcal{O}(n+1))$.  If $b_{n}=\underline{\Delta}_{\mathcal{O}(n)}(a_{n})$, then by the induction hypothesis, $b_{n} \neq 0$ and $b_{n} = \Ker(\alpha_{n})(b_{n-1})$ for some $b_{n-1} \in \operatorname{ker }(\Ker(\gamma_{n}))$.  We define $\underline{\Delta}_{\mathcal{O}(n+1)}(a_{n+1})=\Ker(\alpha_{n+1})(b_{n})$ and extend linearly.  Since $\Ker(\alpha_{n+1})$ is an isomorphism by Lemma \ref{lemma.alphabetaepi}(1), $\underline{\Delta}_{\mathcal{O}(n+1)}$ is monic.

We check that (\ref{eqn.mainsquare}) commutes when $i=n$ and $\psi=\gamma_{n+1}$, from which it will follow that the diagram commutes for all $\psi \in \operatorname{Hom}_{\mathcal{O}_{\mathbb{P}^{1}}}(\mathcal{O}(n),\mathcal{O}(n+1))$.  By Lemma \ref{lemma.k1}(5), $R_{r}(\gamma_{n+1})=0$.  Thus, we must show that $\Ker(\gamma_{n+1})(b_{n})=0$.  To show this, we note that the exactness of (\ref{eqn.kerapplied}) implies the sequence
\begin{equation} \label{eqn.complex}
\Ker (\mathcal{O}(n-1)) \overset{(\Ker(\alpha_{n}),-\Ker(\gamma_{n}))}{\longrightarrow} \Ker (\mathcal{O}(n))^{\oplus 2}
\end{equation}
$$
\overset{\Ker(\gamma_{n+1})+\Ker(\alpha_{n+1})}{\longrightarrow} \Ker (\mathcal{O}(n+1)) \rightarrow 0
$$
is exact.  Thus, since $(b_{n},0)=(\Ker(\alpha_{n}),-\Ker(\gamma_{n}))(b_{n-1}) \in \Ker (\mathcal{O}(n))^{\oplus 2}$, we must have
$$
\Ker(\gamma_{n+1})(b_{n})+\Ker(\alpha_{n+1})(0)=0 \in \Ker (\mathcal{O}(n+1))
$$
so that $\Ker(\gamma_{n+1})(b_{n})=0$ as desired.

We now define $\underline{\Delta}_{\mathcal{O}(n)}$ for all $n<n_{0}$.  We begin by defining $\underline{\Delta}_{\mathcal{O}(n_{0}-1)}$.  To this end, we let $a_{n_{0}-1}=R_{r}(\alpha_{n_{0}})^{-1}(a_{n_{0}})$, which makes sense by Lemma \ref{lemma.k1}(4).  We claim there exists an element
$$
b_{n_{0}-1} \in \Ker(\alpha_{n_{0}})^{-1}(b_{n_{0}}) \cap \operatorname{ker }(\Ker (\gamma_{n_{0}}))
$$
and we define $\underline{\Delta}_{\mathcal{O}(n_{0}-1)}(a_{n_{0}-1})=b_{n_{0}-1}$ and extend linearly.  We note that the claim will imply that $\underline{\Delta}_{\mathcal{O}(n_{0}-1)}$ is monic and that (\ref{eqn.mainsquare}) commutes in case $i=n_{0}-1$.  To prove the claim, we note that (\ref{eqn.complex}) is exact for any value of $n$.  In particular, when $n=n_{0}$, exactness of (\ref{eqn.complex}) implies that $(b_{n_{0}},0) \in  \Ker (\mathcal{O}(n_{0}))^{\oplus 2}$ must be of the form $(\Ker(\alpha_{n_{0}}),-\Ker(\gamma_{n_{0}}))(b_{n_{0}-1})$ for some $b_{n_{0}-1} \in \Ker (\mathcal{O}(n_{0}-1))$ whence the claim.

Finally, suppose $n<n_{0}$ and for all $j$ such that $n \leq j < n_{0}$, we have defined monic $\underline{\Delta}_{\mathcal{O}(j)}$ such that (\ref{eqn.mainsquare}) commutes in case $i=j$.

The definition of $\underline{\Delta}_{\mathcal{O}(n-1)}$ as well as the proof that it is monic and makes (\ref{eqn.mainsquare}) commute in case $i=n-1$ is identical to the proof of these properties for $\underline{\Delta}_{\mathcal{O}(n_{0}-1)}$, and we omit it.  The proposition follows.
\end{proof}

\section{Proof of the structure theorem}
In this section, we assume $F \in {\sf bimod}_{k}(\mathbb{P}^{1}-\operatorname{Spec }k)$.  By Proposition \ref{prop.coherent}, we know $W(F)$ is coherent torsion.
\begin{thm} \label{thm.extension}
There exist nonnegative integers $l$, $l_{i}$ such that
$$
F \cong \oplus_{i=-l}^\infty {H}^{1}(\mathbb{P}^{1},(-)(i))^{\oplus l_{i}} \oplus H^{0}(\mathbb{P}^{1},-\otimes W(F)).
$$
\end{thm}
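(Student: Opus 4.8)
The plan is to show that the short exact sequence of functors
\begin{equation*}
0 \rightarrow \oplus_{i=-l}^\infty {H}^{1}(\mathbb{P}^{1},(-)(i))^{\oplus l_{i}} \rightarrow F \overset{\Gamma_{F}}{\rightarrow} H^{0}(\mathbb{P}^{1},-\otimes W(F)) \rightarrow 0
\end{equation*}
established in Proposition \ref{prop.m} (in case $W(F) \neq 0$) splits in the abelian category ${\sf Funct}_{k}({\sf Qcoh }\mathbb{P}^{1},{\sf Qcoh }(\operatorname{Spec }k))$. If $W(F)=0$ then $\Gamma_F$ has target $0$, so $F \cong \Ker$ and Corollary \ref{cor.kernelstructure} (with $m=0$ forced by Proposition \ref{prop.m}) gives the conclusion with the torsion term absent. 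So assume $W(F) \neq 0$ and set $K := \oplus_{i=-l}^\infty {H}^{1}(\mathbb{P}^{1},(-)(i))^{\oplus l_{i}}$ and $T := H^{0}(\mathbb{P}^{1},-\otimes W(F))$.

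First I would construct a splitting $\sigma: T \rightarrow F$ of $\Gamma_F$ using Lemma \ref{lemma.inductive}. Since $W(F)$ is coherent torsion, $T$ is right exact and commutes with direct limits, and $T$ vanishes on no torsion module in general — but note $K$ is totally global, which is the hypothesis needed on the \emph{target} in Lemma \ref{lemma.inductive}; here I instead want to split, i.e.\ lift the identity $T \rightarrow T$ through $\Gamma_F$, so I would apply Lemma \ref{lemma.inductive} with the roles adjusted, or more directly build a natural transformation $\sigma: T \rightarrow F$ degreewise on the $\mathcal{O}(n)$. Concretely, for each $n$ I need a section $\underline{\sigma}_{\mathcal{O}(n)}: T(\mathcal{O}(n)) \rightarrow F(\mathcal{O}(n))$ of $\Gamma_{F,\mathcal{O}(n)}$ compatible with all $\psi \in \operatorname{Hom}(\mathcal{O}(i),\mathcal{O}(i+1))$. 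The key point is that $\dim_k T(\mathcal{O}(n))$ is eventually constant (in fact equals $\dim_k H^0(\mathbb{P}^1, \mathcal{O}(n)\otimes W(F))$, which stabilizes once $\mathcal{O}(n)\otimes W(F) \cong W(F)$ as a skyscraper-type sheaf), and $\Ker(\mathcal{O}(n))=K(\mathcal{O}(n))$ vanishes for $n$ large by Proposition \ref{prop.m}; so for $n \gg 0$ the map $\Gamma_{F,\mathcal{O}(n)}$ is already an isomorphism and there is a canonical section. Then I would descend: at each step from $n+1$ to $n$ and from $n$ to $n-1$, use the snake-lemma sequence (\ref{eqn.snake}) together with the commuting squares for $\alpha$ and a second linear form to propagate a consistent choice of section, exactly paralleling the inductive bookkeeping in the proof of Proposition \ref{prop.m} but now lifting through $\Gamma_F$ rather than mapping into $\Ker$. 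Since $K$ is totally global and $T$ does not vanish on the relevant torsion modules, the argument that the resulting $\sigma$ is well-defined as a natural transformation is precisely Lemma \ref{lemma.inductive} applied in the appropriate direction, and $\Gamma_F \circ \sigma = \operatorname{id}_T$ holds on each $\mathcal{O}(n)$ hence everywhere by \cite[Lemma 7.5]{N}.

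Given the splitting $\sigma$, the exact sequence yields $F \cong K \oplus T$ in ${\sf Funct}_{k}({\sf Qcoh }\mathbb{P}^{1},{\sf Qcoh }(\operatorname{Spec }k))$, which is the claimed decomposition with $l_i, l$ the integers from Proposition \ref{prop.m}. I would then remark that the two summands are intrinsically distinguished — $K$ is totally global while $T=H^0(\mathbb{P}^1,-\otimes W(F))$ is an integral transform — so the decomposition is essentially canonical, though the theorem only asserts existence.

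The main obstacle I expect is verifying that the degreewise sections can be chosen coherently, i.e.\ that the square (\ref{eqn.mainsquare})-type compatibility holds for \emph{every} $\psi$, not just for the two distinguished linear forms $\alpha$ and $\gamma$; as in Lemma \ref{lemma.inductive} this reduces (over an algebraically closed field, factoring any nonzero $\phi: \mathcal{O}(i)\to\mathcal{O}(j)$ into linear forms) to the case of a single linear form, but one must check the choices made on the $\alpha$-ladder are compatible with an arbitrary linear form. Here the snake sequence (\ref{eqn.snake}) applied to $0 \to \mathcal{O}(n-2) \to \mathcal{O}(n-1)^{\oplus 2} \to \mathcal{O}(n) \to 0$ — the analogue of (\ref{eqn.vect}) — together with the vanishing $\Cok = 0$ from Corollary \ref{cor.cokvanish} supplies exactly the surjectivity needed to adjust a section by an element of $K(\mathcal{O}(n))$ so that it becomes compatible with a second linear form; this is the genuine content, and the rest is routine diagram-chasing of the kind already omitted elsewhere in the paper.
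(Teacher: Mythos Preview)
Your overall strategy---split the short exact sequence of Proposition~\ref{prop.m}---is the same as the paper's. The difference is \emph{which} splitting you build: you propose a section $\sigma:T\to F$ of $\Gamma_F$, whereas the paper constructs a retraction $\Lambda:F\to\Ker$ of the inclusion $\Theta:\Ker\hookrightarrow F$. This choice matters because Lemma~\ref{lemma.inductive}, the tool for promoting degreewise maps on the $\mathcal{O}(n)$ to a natural transformation on all of ${\sf Qcoh}\,\mathbb{P}^1$, requires its \emph{target} functor to be totally global. For the paper's $\Lambda$ the target is $\Ker$, totally global by Theorem~\ref{thm.watts}, so the lemma applies on the nose; the work is then to choose, by downward induction from the range $n\ge n_0$ where $\Ker(\mathcal{O}(n))=0$, complements $B_{n-1}\subset F(\mathcal{O}(n-1))$ to $\operatorname{im}\Theta_{\mathcal{O}(n-1)}$ compatible with both linear forms $\alpha_n,\beta_n$.

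For your $\sigma$ the target is $F$, which is \emph{not} totally global when $W(F)\neq 0$, so the assertion that Lemma~\ref{lemma.inductive} can be ``applied in the appropriate direction'' is a genuine gap. The proof of that lemma sets $\underline{\Omega}_{\mathcal{T}}=0$ on every coherent torsion $\mathcal{T}$ and uses $G(\mathcal{T})=0$ to force naturality for morphisms into torsion; with $G=F$ you cannot take $\sigma_{\mathcal{T}}=0$ (since $T(\mathcal{T})\neq 0$ whenever $\mathcal{T}$ meets the support of $W(F)$), and you give no substitute. The repair is not hard but must be made explicit: either define $\sigma_{\mathcal{T}}:=(\Gamma_{F,\mathcal{T}})^{-1}$ (an isomorphism because $\Ker$, being totally global, vanishes on torsion) and verify naturality for maps $\mathcal{O}(n)\to\mathcal{T}$ by hand, or---closer to the paper---convert your compatible family of sections into retractions via $\underline{\Lambda}_{\mathcal{O}(n)}:=\Theta_{\mathcal{O}(n)}^{-1}\circ(\operatorname{id}-\sigma_{\mathcal{O}(n)}\circ\Gamma_{F,\mathcal{O}(n)})$ and then invoke Lemma~\ref{lemma.inductive} legitimately. (A minor separate point: Section~\ref{section.ker} carries the standing hypothesis $W(F)\neq 0$, so for the case $W(F)=0$ you should appeal to Theorem~\ref{thm.watts} together with \cite[Lemma~7.2, Theorem~7.12]{N}, as the paper does, rather than to Corollary~\ref{cor.kernelstructure} or Proposition~\ref{prop.m}.)
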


\begin{proof}
In case $W(F)=0$, Theorem \ref{thm.watts} implies that $F \cong \Ker$.  Therefore, the result follows from \cite[Lemma 7.2]{N} and \cite[Theorem 7.12]{N}.

If $W(F) \neq 0$, Proposition \ref{prop.m} applies.  We prove that the short exact sequence (\ref{eqn.finalses}) splits, i.e. we construct a natural transformation $\Lambda:F \rightarrow \Ker$ which splits the inclusion $\Theta: \Ker \rightarrow F$ from Theorem \ref{thm.watts} in the category ${\sf Funct}_{k}({\sf Qcoh }\mathbb{P}^{1},{\sf Qcoh }(\operatorname{Spec }k))$.  The functors $F$ and $\Ker$ are $k$-linear and commute with direct limits and $\Ker$ is totally global.  Thus, to construct $\Lambda$ it suffices, by Lemma \ref{lemma.inductive}, to define, for all $n \in \mathbb{Z}$, a morphism $\underline{\Lambda}_{\mathcal{O}(n)}$ such that $\underline{\Lambda}_{\mathcal{O}(i)} \Theta_{\mathcal{O}(i)}$ is the identity for all $i \in \mathbb{Z}$ and such that the diagram
\begin{equation} \label{eqn.taucommute}
\begin{CD}
F(\mathcal{O}(i)) & \overset{F\psi}{\rightarrow} & F(\mathcal{O}(i+1)) \\
@V{\underline{\Lambda}_{\mathcal{O}(i)}}VV @VV{\underline{\Lambda}_{\mathcal{O}(i+1)}}V \\
\Ker(\mathcal{O}(i)) & \underset{\Ker (\psi)}{\rightarrow} & \Ker(\mathcal{O}(i+1))
\end{CD}
\end{equation}
commutes for all $i \in \mathbb{Z}$ and all $\psi \in \operatorname{Hom}_{\mathcal{O}_{\mathbb{P}^{1}}}(\mathcal{O}(i),\mathcal{O}(i+1))$.  Once $\Lambda$ is constructed, we will show that $\Lambda \Theta$ is the identity.

By Proposition \ref{prop.m}, there exists $n_{0} \in \mathbb{Z}$ such that $\Ker (\mathcal{O}(n)) = 0$ for all $n \geq n_{0}$.  For such $n$, we define $\underline{\Lambda}_{\mathcal{O}(n)}=0$.

We now proceed to define $\underline{\Lambda}_{\mathcal{O}(n)}$ for $n<n_{0}$ inductively.  We begin by defining $\underline{\Lambda}_{\mathcal{O}(n_{0}-1)}$.  To this end, we pick a subspace $B_{n_{0}-1} \subset F(\mathcal{O}(n_{0}-1))$ complimentary to the image of $\Theta_{\mathcal{O}(n_{0}-1)}$.  If $a \in \Ker(\mathcal{O}(n_{0}-1))$ and $b \in B_{n_{0}-1}$ then we define $\underline{\Lambda}_{\mathcal{O}(n_{0}-1)}(\Theta_{\mathcal{O}(n_{0}-1)}(a)+b)=a$.  It follows immediately that $\underline{\Lambda}_{\mathcal{O}(n_{0}-1)} \Theta_{\mathcal{O}(n_{0}-1)}$ is the identity and that (\ref{eqn.taucommute}) commutes in case $i=n_{0}-1$.

Next, suppose there is an $n<n_{0}$ such that for all $j$ with $n \leq j \leq n_{0}$ we have defined $\underline{\Lambda}_{\mathcal{O}(j)}$ with the property that $\underline{\Lambda}_{\mathcal{O}(j)} \Theta_{\mathcal{O}(j)}$ is the identity and makes (\ref{eqn.taucommute}) commute when $i=j$.

We now define $\underline{\Lambda}_{\mathcal{O}(n-1)}$.  We do this by constructing a subspace $B_{n-1}$ of $F(\mathcal{O}(n-1))$ complementary to $\Theta_{\mathcal{O}(n-1)}(\Ker (\mathcal{O}(n-1)))$ and defining $\underline{\Lambda}_{\mathcal{O}(n-1)}(\Theta_{\mathcal{O}(n-1)}(a)+b)=a$, where $b \in B_{n-1}$.  We will then see that, by choice of $B_{n-1}$, $\underline{\Lambda}_{\mathcal{O}(n-1)}$ makes (\ref{eqn.taucommute}) commute when $i=n-1$.

Let $B_{n} \subset F(\mathcal{O}(n))$ denote the kernel of $\underline{\Lambda}_{\mathcal{O}(n)}$.  We define $B_{n-1} \subset F(\mathcal{O}(n-1))$ as follows.  We note that the image under $\Theta_{\mathcal{O}(n-1)}$ of the kernel of
$$
(\Ker (\alpha_{n}),-\Ker (\beta_{n})):\Ker (\mathcal{O}(n-1)) \rightarrow \Ker (\mathcal{O}(n))^{\oplus 2},
$$
which we denote by $K$, is contained in $F\alpha_{n}^{-1}(B_{n}) \cap F\beta_{n}^{-1}(B_{n})$.  We define $B_{n-1} \subset F\alpha_{n}^{-1}(B_{n}) \cap F\beta_{n}^{-1}(B_{n})$ to be a complimentary subspace to $K$ in $F\alpha_{n}^{-1}(B_{n}) \cap F\beta_{n}^{-1}(B_{n})$.

We claim that $B_{n-1}$ is complimentary to the image of $\Theta_{\mathcal{O}(n-1)}$.  To prove the claim, consider the exact sequence
\begin{equation} \label{eqn.vectorlast}
0 \longrightarrow \mathcal{O}(n-1) \overset{(\alpha_{n},-\beta_{n})}{\longrightarrow} \mathcal{O}(n)^{\oplus 2} \overset{\beta_{n+1}+\alpha_{n+1}}{\longrightarrow} \mathcal{O}(n+1) \longrightarrow 0.
\end{equation}
The sequence (\ref{eqn.vectorlast}) induces a diagram
\begin{equation} \label{eqn.tworow}
\begin{CD}
\Ker (\mathcal{O}(n-1)) & \rightarrow & \Ker (\mathcal{O}(n))^{\oplus 2} & \rightarrow & \Ker(\mathcal{O}(n+1)) & \rightarrow & 0  \\
@VVV @VVV @VVV \\
F(\mathcal{O}(n-1)) & \rightarrow & F(\mathcal{O}(n))^{\oplus 2} & \rightarrow & F(\mathcal{O}(n+1)) & \rightarrow & 0
\end{CD}
\end{equation}
whose verticals are inclusions and whose top row is exact by the exactness of (\ref{eqn.snake}) in case $\Upsilon=\Gamma_{F}$ and (\ref{eqn.ses}) is (\ref{eqn.vectorlast}).  We begin the proof of the claim by showing that $B_{n-1}$ intersected with image of $\Theta_{\mathcal{O}(n-1)}$ is $0$.  Suppose $x$ is in the intersection.  By the commutativity of (\ref{eqn.tworow}), $F\alpha_{n}(x) \in B_{n} \cap \Theta_{\mathcal{O}(n)}(\Ker(\mathcal{O}(n)))$.  Thus, $F\alpha_{n}(x)=0$ and similarly, $F\beta_{n}(x)=0$.  Since the middle vertical of (\ref{eqn.tworow}) is an inclusion, and since $x \in \Theta_{\mathcal{O}(n-1)}(\Ker(\mathcal{O}(n-1)))$, say $x=\Theta_{\mathcal{O}(n-1)}(a)$, it follows again from the commutativity of (\ref{eqn.tworow}) that $\Ker (\alpha_{n})(a)=0=\Ker (\beta_{n})(a)$.  Thus, $x$ is in the image, under $\Theta_{\mathcal{O}(n-1)}$, of the kernel of the top left horizontal.  Therefore, by choice of $B_{n-1}$, $x=0$.

To complete the proof of the claim, it remains to show that
$$
\Theta_{\mathcal{O}(n-1)}(\Ker (\mathcal{O}(n-1))) + B_{n-1}= F\mathcal{O}(n-1).
$$
To this end, it suffices to show
\begin{equation} \label{eqn.end}
\Theta_{\mathcal{O}(n-1)}(\Ker (\mathcal{O}(n-1)))+F\alpha_{n}^{-1}(B_{n}) \cap F\beta_{n}^{-1}(B_{n})=F(\mathcal{O}(n-1)).
\end{equation}
To prove (\ref{eqn.end}), suppose $x \in F(\mathcal{O}(n-1))$.  Then $(F\alpha_{n},-F\beta_{n})(x)$ has the form
$$
(\Theta_{\mathcal{O}(n)}(a_1)+b_{1},\Theta_{\mathcal{O}(n)}(a_2)+b_{2})\in F(\mathcal{O}(n))^{\oplus 2}
$$
where $a_1,a_2 \in \Ker (\mathcal{O}(n))$ and $b_1,b_2 \in B_{n}$.  Since the bottom row of (\ref{eqn.tworow}) is exact, $(\Theta_{\mathcal{O}(n)}(a_1)+b_1,\Theta_{\mathcal{O}(n)}(a_2)+b_2)$ is in the kernel of the second map in the bottom row.  By the induction hypothesis, the diagram (\ref{eqn.taucommute}) commutes in case $i=n$.  It follows that $(a_1,a_2) \in \Ker (\mathcal{O}(n))^{\oplus 2}$ is in the kernel of the second top horizontal of (\ref{eqn.tworow}).  Therefore, there exists a $c \in \Ker(\mathcal{O}(n-1))$ which maps to $(a_1,a_2) \in \Ker (\mathcal{O}(n))^{\oplus 2}$ via the top left horizontal of (\ref{eqn.tworow}) so that $(F\alpha_{n},-F\beta_{n})(x-\Theta_{\mathcal{O}(n-1)}(c))=(b_1,b_2) \in F(\mathcal{O}(n))^{\oplus 2}$.  We conclude that $x-\Theta_{\mathcal{O}(n-1)}(c) \in F\alpha_{n}^{-1}(B_{n}) \cap F\beta_{n}^{-1}(B_{n})$, establishing the claim that $B_{n-1}$ is complementary to the image of $\Theta_{\mathcal{O}(n-1)}$ in $F(\mathcal{O}(n-1))$.

We define $\underline{\Lambda}_{\mathcal{O}(n-1)}(\Theta_{\mathcal{O}(n-1)}(a)+b)=a$, where $b \in B_{n-1}$.  It follows immediately that $\underline{\Lambda}_{\mathcal{O}(n-1)} \Theta_{\mathcal{O}(n-1)}$ is the identity and that (\ref{eqn.taucommute}) commutes in the case that $i=n-1$.

 To show that $\Lambda \Theta$ is the identity, we note that by construction of $\Lambda$, $(\Lambda \Theta)_{\mathcal{M}}=\operatorname{id }_{\Ker(\mathcal{M})}$ for all coherent $\mathcal{M}$.  Thus, by the uniqueness statement of Lemma \ref{lemma.inductive}, the composition $\Lambda \Theta$ is the identity functor.  The result follows.
\end{proof}

\begin{cor} \label{cor.vectorbundles}
The following are equivalent:

\begin{enumerate}

\item The natural transformation $\Gamma_{F}$ is an isomorphism.

\item $\operatorname{dim}_{k}(F(\mathcal{O}(i)))$ is constant.

\item $F$ is exact on short exact sequences of vector bundles.

\end{enumerate}
\end{cor}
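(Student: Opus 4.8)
The plan is to prove the cycle of implications $(1)\Rightarrow(3)\Rightarrow(2)\Rightarrow(1)$. The tools are: the short exact sequence $0\rightarrow\Ker\rightarrow F\overset{\Gamma_{F}}{\rightarrow}H^{0}(\mathbb{P}^{1},-\otimes W(F))\rightarrow0$ of Corollary \ref{cor.cokvanish}, which shows $\Gamma_{F}$ is an isomorphism if and only if $\Ker=0$; the fact (Proposition \ref{prop.coherent}) that $W(F)$ is coherent torsion, hence supported on finitely many closed points, so that for any line bundle $\mathcal{L}$ the sheaf $\mathcal{L}\otimes W(F)$ is coherent torsion with vanishing $H^{1}$ on $\mathbb{P}^{1}$, and with $\dim_{k}H^{0}(\mathbb{P}^{1},\mathcal{L}\otimes W(F))$ independent of $\mathcal{L}$ because $\mathcal{L}$ is trivial near the (finite) support of $W(F)$; and, for the last implication only, the description $\Ker\cong\oplus_{i=-l}^{\infty}H^{1}(\mathbb{P}^{1},(-)(i))^{\oplus l_{i}}$ provided by (\ref{eqn.finalses}) in Proposition \ref{prop.m}.

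For $(1)\Rightarrow(3)$: if $\Gamma_{F}$ is an isomorphism then $\Ker=0$, so $F\cong H^{0}(\mathbb{P}^{1},-\otimes W(F))$. Given a short exact sequence of vector bundles $0\rightarrow\mathcal{M}\rightarrow\mathcal{N}\rightarrow\mathcal{P}\rightarrow0$, flatness of $\mathcal{P}$ keeps $0\rightarrow\mathcal{M}\otimes W(F)\rightarrow\mathcal{N}\otimes W(F)\rightarrow\mathcal{P}\otimes W(F)\rightarrow0$ exact, and since each term is coherent torsion on $\mathbb{P}^{1}$ its $H^{1}$ vanishes, so applying $H^{0}$ preserves exactness. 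Thus $F$ is exact on the sequence.

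For $(3)\Rightarrow(2)$: apply the exact (on vector bundle sequences) and additive functor $F$ to the short exact sequence of vector bundles (\ref{eqn.vect}), i.e. $0\rightarrow\mathcal{O}(n-2)\rightarrow\mathcal{O}(n-1)^{\oplus2}\rightarrow\mathcal{O}(n)\rightarrow0$. Additivity of $\dim_{k}$ on the resulting short exact sequence gives $\dim_{k}F(\mathcal{O}(n-2))-2\dim_{k}F(\mathcal{O}(n-1))+\dim_{k}F(\mathcal{O}(n))=0$ for all $n$, so the first difference of $n\mapsto\dim_{k}F(\mathcal{O}(n))$ is constant and this function is affine-linear in $n$; since it takes only nonnegative values it must be constant, which is $(2)$.

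For $(2)\Rightarrow(1)$: the sequence of Corollary \ref{cor.cokvanish} gives $\dim_{k}\Ker(\mathcal{O}(n))=\dim_{k}F(\mathcal{O}(n))-\dim_{k}H^{0}(\mathbb{P}^{1},\mathcal{O}(n)\otimes W(F))$, and the subtrahend is independent of $n$, so $(2)$ forces $\dim_{k}\Ker(\mathcal{O}(n))$ to be constant. But by (\ref{eqn.finalses}), $\dim_{k}\Ker(\mathcal{O}(n))=\sum_{i\geq-l}l_{i}\max(0,-n-i-1)$, which tends to $\infty$ as $n\rightarrow-\infty$ unless every $l_{i}$ vanishes; hence $\Ker=0$ and $\Gamma_{F}$ is an isomorphism. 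The only step relying on the substantial work of the previous sections is this last one, where the precise form of $\Ker$ from Proposition \ref{prop.m}—rather than merely its being totally global—is what is needed; the other two implications are a short diagram chase and an elementary positivity argument.
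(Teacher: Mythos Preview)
Your argument is correct, and the cycle $(1)\Rightarrow(3)\Rightarrow(2)\Rightarrow(1)$ is carried out cleanly. The step $(3)\Rightarrow(2)$ is genuinely different from the paper's treatment and is the interesting point of comparison: the paper deduces $(3)\Rightarrow(1)$ and $(2)\Rightarrow(1)$ both from the full structure Theorem~\ref{thm.extension}, whereas you obtain $(3)\Rightarrow(2)$ by a direct and elementary trick---applying $F$ to the Koszul sequence (\ref{eqn.vect}) to get the second-difference relation $d_{n-2}-2d_{n-1}+d_n=0$ for $d_n=\dim_k F(\mathcal{O}(n))$, hence $d_n$ is affine in $n$ and, being nonnegative on all of $\mathbb{Z}$, constant. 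This bypasses the splitting result entirely for that implication. For $(2)\Rightarrow(1)$ you only need the identification of $\Ker$ with a sum of shifted $H^1$'s, not the splitting of (\ref{eqn.finalses}); so in total your route uses strictly less machinery than the paper's (Proposition~\ref{prop.m} rather than Theorem~\ref{thm.extension}).

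One small caveat: Proposition~\ref{prop.m} and equation~(\ref{eqn.finalses}) sit in Section~\ref{section.ker}, whose standing hypothesis is $W(F)\neq 0$. Your $(2)\Rightarrow(1)$ therefore needs a word about the case $W(F)=0$. This is easy: then the target of $\Gamma_F$ vanishes, so $\Ker=F$; and the first paragraph of the proof of Theorem~\ref{thm.extension} (invoking \cite[Lemma~7.2, Theorem~7.12]{N}) still gives $\Ker\cong\oplus_i H^1(\mathbb{P}^1,(-)(i))^{\oplus l_i}$, after which your dimension argument applies verbatim. Alternatively, simply cite Theorem~\ref{thm.extension} in place of (\ref{eqn.finalses}) and the issue disappears.
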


\begin{proof}
We first prove (1) if and only if (2):  if $\Gamma_{F}$ is an isomorphism, then $F \cong H^{0}(\mathbb{P}^{1},-\otimes W(F))$.  Therefore, since $W(F)$ is coherent torsion, $\operatorname{dim}_{k}(F(\mathcal{O}(i)))$ is independent of $i$. Conversely, if $\operatorname{dim}_{k}(F(\mathcal{O}(i)))$ is constant, Theorem \ref{thm.extension} implies $\Ker =0$.

Next we prove (1) if and only if (3):  If $\Gamma_{F}$ is an isomorphism then, since $W(F)$ is coherent torsion by Proposition \ref{prop.coherent}, $F$ is exact on vector bundles.  Conversely, since Theorem \ref{thm.extension} implies there exist nonnegative integers $n$, $n_{i}$ such that $F \cong \oplus_{i=-n}^\infty {H}^{1}(\mathbb{P}^{1},(-)(i))^{\oplus n_{i}} \oplus H^{0}(\mathbb{P}^{1},-\otimes W(F))$, $F$ exact on vector bundles implies that $n_{i}=0$ for all $i$.  It follows that $\Gamma_{F}$ is an isomorphism.
\end{proof}

\begin{cor} \label{cor.morphism}
The following are equivalent:
\begin{enumerate}
\item $F \cong f^{*}$ where $f:\operatorname{Spec }k \rightarrow \mathbb{P}^{1}$ is a morphism of $k$-schemes.

\item  $\operatorname{dim}_{k}(F(\mathcal{O}(i)))=1$ for all $i$.

\item $F$ is exact on short exact sequences of vector bundles and $\operatorname{dim}_{k}(F(\mathcal{O}(i)))=1$ for some $i$.
\end{enumerate}
\end{cor}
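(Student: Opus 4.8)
The plan is to deduce the corollary from Theorem~\ref{thm.extension}, Corollary~\ref{cor.vectorbundles}, and a length computation for $W(F)$.

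First I would dispatch the three easy implications. For (1)~$\Rightarrow$~(2): if $F\cong f^{*}$ for a $k$-morphism $f:\operatorname{Spec }k\rightarrow\mathbb{P}^{1}$, then $f^{*}\mathcal{O}(i)$ is an invertible sheaf on $\operatorname{Spec }k$, hence one-dimensional over $k$, for every $i$. For (2)~$\Rightarrow$~(3): if $\dim_{k}F(\mathcal{O}(i))=1$ for all $i$ this dimension is in particular constant, so the implication (2)~$\Rightarrow$~(3) of Corollary~\ref{cor.vectorbundles} gives that $F$ is exact on short exact sequences of vector bundles, whence (3). For (3)~$\Rightarrow$~(2): the implication (3)~$\Rightarrow$~(2) of Corollary~\ref{cor.vectorbundles} shows $\dim_{k}F(\mathcal{O}(i))$ is constant, and since it equals $1$ for some $i$ it equals $1$ for all $i$.

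The substantive implication is (2)~$\Rightarrow$~(1). Assuming (2), the implication (2)~$\Rightarrow$~(1) of Corollary~\ref{cor.vectorbundles} shows $\Gamma_{F}$ is an isomorphism, so $F\cong H^{0}(\mathbb{P}^{1},-\otimes W(F))$ with $W(F)$ coherent torsion by Proposition~\ref{prop.coherent}. I would then run the following count: since $W(F)$ is torsion and $\mathcal{O}(i)$ is invertible, $\mathcal{O}(i)\otimes W(F)$ is torsion of the same length as $W(F)$ and has vanishing higher cohomology, so $H^{0}(\mathbb{P}^{1},\mathcal{O}(i)\otimes W(F))$ is the direct sum of the stalks of $\mathcal{O}(i)\otimes W(F)$; because $k$ is algebraically closed the residue field at every closed point is $k$, and hence $\dim_{k}H^{0}(\mathbb{P}^{1},\mathcal{O}(i)\otimes W(F))=\operatorname{length}(W(F))$. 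Thus hypothesis (2) forces $\operatorname{length}(W(F))=1$, i.e.\ $W(F)\cong k(p)$ for a unique closed point $p\in\mathbb{P}^{1}$. To conclude I would write $i_{p}:\operatorname{Spec }k\rightarrow\mathbb{P}^{1}$ for the closed immersion with image $p$---every $k$-morphism $\operatorname{Spec }k\rightarrow\mathbb{P}^{1}$ is of this form since $k$ is algebraically closed---and observe that there are natural isomorphisms $i_{p}^{*}\mathcal{F}\cong\mathcal{F}_{p}/\mathfrak{m}_{p}\mathcal{F}_{p}\cong H^{0}(\mathbb{P}^{1},\mathcal{F}\otimes k(p))$, the last because $\mathcal{F}\otimes k(p)$ is the skyscraper sheaf $(i_{p})_{*}i_{p}^{*}\mathcal{F}$; hence $F\cong H^{0}(\mathbb{P}^{1},-\otimes k(p))\cong i_{p}^{*}$, which is (1).

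I expect the only real work to lie in (2)~$\Rightarrow$~(1): verifying carefully that $\dim_{k}H^{0}$ of a coherent torsion sheaf on $\mathbb{P}^{1}$ equals its length when $k$ is algebraically closed, and that the identification $H^{0}(\mathbb{P}^{1},-\otimes k(p))\cong i_{p}^{*}$ is natural in the sheaf variable, so that it is an isomorphism of functors ${\sf Qcoh }\mathbb{P}^{1}\rightarrow{\sf Mod }k$ and not merely a pointwise statement. Both are standard, but they are exactly where the geometric content of (1)---that $W(F)$ must be the structure sheaf of a single reduced point---comes in.
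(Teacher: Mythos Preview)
Your proof is correct and follows essentially the same route as the paper's: both use Corollary~\ref{cor.vectorbundles} to handle the equivalence of (2) and (3), and both reduce (2)$\Rightarrow$(1) to showing that $W(F)$ has length one (hence $W(F)\cong k(p)$) and identifying $H^{0}(\mathbb{P}^{1},-\otimes k(p))$ with $i_{p}^{*}$. You are in fact slightly more explicit than the paper about the length computation and about naturality of the identification $H^{0}(\mathbb{P}^{1},-\otimes k(p))\cong i_{p}^{*}$, which the paper simply asserts.
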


\begin{proof}
We first prove (1) if and only if (2):  if $F \cong f^{*}$ where $f:\operatorname{Spec }k \rightarrow \mathbb{P}^{1}$ is a morphism of schemes over $k$, then $f^{*} = H^{0}(\mathbb{P}^{1},-\otimes k(r))$ for some closed point $r$ in $\mathbb{P}^{1}$, whence the forward direction.  Conversely, by Theorem \ref{thm.extension}, there exist nonnegative integers $n$, $n_{i}$ such that $F \cong \oplus_{i=-n}^\infty {H}^{1}(\mathbb{P}^{1},(-)(i))^{\oplus n_{i}} \oplus H^{0}(\mathbb{P}^{1},-\otimes W(F))$.  Therefore, if $\operatorname{dim}_{k}(F(\mathcal{O}(i)))=1$ for all $i$, $n_{i}=0$ for all $i$, and $\operatorname{dim}_{k}(\mathcal{O}(i) \otimes W(F))=1$ for all $i$.  Since $W(F)$ is coherent torsion, $W(F) \cong k(r)$ for some closed point $r \in \mathbb{P}^{1}$.

Next we prove (2) if and only if (3):  if $\operatorname{dim}_{k}(F(\mathcal{O}(i)))=1$ for all $i$, then Corollary \ref{cor.vectorbundles} implies that $F$ is exact on vector bundles and trivially implies the second part of (3).  Conversely, if $F$ is exact on vector bundles, Corollary \ref{cor.vectorbundles} implies that $\operatorname{dim}_{k}(F(\mathcal{O}(i)))$ is constant.  Since it equals 1 for some $i$, it equals 1 for all $i$, whence (2).
\end{proof}

\end{document}